\documentclass[reqno,tbtags]{amsproc}
\usepackage{amssymb}
\usepackage{amsmath}
\usepackage{ecltree}
\usepackage{epic}
\usepackage{epsfig}
\usepackage{eufrak}
\usepackage{euscript}
\usepackage{fancybox}
\usepackage{graphicx}
\usepackage{multicol}
\usepackage{subfigure}
\usepackage{wrapfig}

\newtheorem{theorem}{Theorem}[section]

\theoremstyle{definition}
\newtheorem{definition}{Definition}[section]
\newtheorem{remark}{Remark}[section]

\newtheorem{example}{Example}[section]

\def\AASF{\emph{Ann. Academi{\ae} Scientiarum Fennic{\ae}\/}}

\def\AIHP{\emph{Ann. Inst. H. Po\-in\-ca\-re\/}}
\def\AP{\emph{Ann. Probab.\/}}

\def\AMASH{\emph{Acta Math. Acad. Sci. Hungar.}}
\def\AMS{\emph{Ann. Math. Stat.\/}}
\def\AAP{\emph{Ad\-van\-ces in Ap\-pli\-ed Pro\-ba\-bi\-li\-ty\/}}
\def\ASENS{\emph{Ann. Sci. {\'E}cole Norm. Sup.\/}}
\def\BAMS{\emph{Bull. Amer. Math. Soc.}\/} 
\def\BSMF{\emph{Bull. Soc. Math. de France\/}}
\def\CACM{\emph{Communications of the Association for Computing Machinery (ACM)\/}}
\def\ConM{\emph{Contemporary Mathematics\/}}

\def\IANSSSR{\emph{Izv. Akad. Nauk SSSR\/}}

\def\JAP{\emph{Journal of Applied Probability\/}}
\def\JCAM{\emph{Journal of Computational and Applied Mathematics\/}}

\def\JSM{\emph{Journal of Soviet Math.\/}}
\def\JMAA{\emph{J. Math. Anal. Appl.\/}}
\def\LMJ{\emph{Lith. Math. J.\/}}
\def\MCMA{\emph{Monte Carlo Methods and Appl.\/}}
\def\MS{\emph{Management Science\/}}

\def\PCPS{\emph{Proc. Cam\-bridge Phil. Soc.\/}}
\def\PJM{\emph{Pacific J. Math.\/}}
\def\PMS{\emph{Pro\-ba\-bility and Mathematical Statistics\/}}
\def\PTRF{\emph{Probability Theory and Related Fields\/}}
\def\ReM{\emph{Mat. Sbornik, N. Ser.\/}}
\def\RHSA{\emph{Rev. Histoire Sci. Appl.\/}}
\def\RMUI{\emph{Revue Math. de l'Union Interbalkanique\/}}
\def\SAJ{\emph{Scan\-di\-na\-vian Ac\-tua\-rial Journal\/}}

\def\SMD{\emph{Soviet Math. Dokl.\/}}
\def\SPA{\emph{Sto\-chas\-tic Pro\-ces\-ses and their Ap\-pli\-ca\-ti\-ons\/}}
\def\TAMS{\emph{Trans. Amer. Math. Soc.\/}}
\def\TPA{\emph{Theory Probab. Appl.}}
\def\ZW{\emph{Z. Wahr\-schein\-lich\-keits\-the\-orie Verw. Geb.}}
\textwidth=128mm \textheight=205mm
\hoffset=-10pt \voffset=0pt
\numberwithin{equation}{section}
\pagestyle{headings}
\def\eqD{\overset{d}{=}}
\def\wrt{w.r.t.\hskip 4pt}
\def\cdf{c.d.f.\hskip 4pt}
\def\pdf{p.d.f.\hskip 4pt}
\def\cpdf{p.d.f.}
\def\as{a.s.\hskip 4pt}
\def\iid{i.i.d.\hskip 4pt}
\def\zzz{--}
\def\zzzx{\hskip 4pt}
\def\paramX{\varrho}
\def\paramY{\rho}
\def\paramT{\delta}
\def\CLT{\textsl{CLT}\;}
\def\cCLT{\textsl{CLT}}
\def\WLLN{\textsl{WLLN}\;}
\def\cWLLN{\textsl{WLLN}}
\def\SLLN{\textsl{SLLN}\;}
\def\cSLLN{\textsl{SLLN}}

\def\cLIL{\textsl{LIL}}
\def\mRI{$\mathcal{R\hskip-.5pt/I}$}
\def\mII{$\mathcal{I\hskip-.5pt/I}$}
\def\mRD{$\mathcal{R\hskip-.5pt/D}$}
\def\mID{$\mathcal{I\hskip-.5pt/D}$}
\def\bmRI{$\boldsymbol{\mathcal{R\hskip-.5pt/I}}$}
\def\bmII{$\boldsymbol{\mathcal{I\hskip-.5pt/I}}$}
\def\bmRD{$\boldsymbol{\mathcal{R\hskip-.5pt/D}}$}
\def\bmID{$\boldsymbol{\mathcal{I\hskip-.5pt/D}}$}

\renewcommand{\P}{\mathsf{P}}
\def\toP{\overset{\scriptscriptstyle{\P}}{\rightarrow}}

\newcommand{\Space}[1]{\mathit{#1}}
\newcommand{\Field}[1]{\mathcal{#1}}
\newcommand{\bKovSN}[2]{\mathcal{B}_{\Sum{S}{}\infLS{}}^{#1}{\,#2}}
\newcommand{\AssSum}[2]{{\olss{#1}}_{#2}}
\newcommand{\infLSAss}[1]{{\bar{\mathcal{N}}}_{#1}}
\newcommand{\AssBlock}[3]{{\bar{\mathcal#1}}_{#2}^{#3}}

\newcommand{\ols}[1]{\mskip0.2\thinmuskip\overline{\mskip-0.3\medmuskip{#1}\mskip0.3\medmuskip}\mskip0.5\thinmuskip} 
\newcommand{\LaddAss}[1]{\,\ols{\mathbb{\ell}}_{#1}}
\newcommand{\olss}[1]{\mskip0.5\thinmuskip\overline{\mskip-0.5\medmuskip{#1}\mskip-0.6\medmuskip}\mskip0.5\thinmuskip} 
\newcommand{\Ass}[2]{\olss{#1}_{#2}}
\newcommand{\olssX}[1]{\mskip1.2\thinmuskip\overline{\mskip-1.2\medmuskip{#1}\mskip-0.5\medmuskip}\mskip0.5\thinmuskip} 
\newcommand{\AssX}[1]{\olssX{X}_{#1}}

\newcommand{\olssT}[1]{\mskip0.5\thinmuskip\overline{\mskip-0.5\medmuskip{#1}\mskip-0.3\medmuskip}\mskip0.3\thinmuskip} 
\newcommand{\AssT}[1]{\olssT{T}_{#1}}

\newcommand{\AInt}[2]{{\mathcal{#1}}_{#2}}
\newcommand{\muIG}{\mu}
\newcommand{\lambdaIG}{\lambda}
\newcommand{\HmuIG}{\hat{\mu}}
\newcommand{\LunKonst}{\mathbb{C}}
\newcommand{\1}{\boldsymbol{\mathsf 1}}
\newcommand{\LunAdjust}{\varkappa}
\newcommand{\BinfLS}[1]{\mathcal{N}_{#1}}

\newcommand{\Garb}[1]{\EuScript{G}_{\,#1}}
\newcommand{\const}[3]{{#1}_{#2}^{#3}}
\newcommand{\centrBlock}[3]{\hat{\mathcal#1}_{#2}^{\hskip 1pt#3}}
\newcommand{\moplus}{m_{_{\scriptscriptstyle\vartriangle}}}
\newcommand{\mominus}{m_{_{\scriptscriptstyle\triangledown}}}
\newcommand{\Doplus}{D_{{\hskip -1pt\scriptscriptstyle\vartriangle}}}
\newcommand{\Dominus}{D_{{\hskip -1pt\scriptscriptstyle\triangledown}}}
\newcommand{\Ladd}[1]{\mathbb{\ell}_{#1}}
\newcommand{\RemTerm}[1]{\mathcal{R}_{#1}}
\newcommand{\SeqApprox}[2]{\mathcal{I}_{#1}^{\,#2}}
\newcommand{\HNode}[4]{\lambda_{#1,#2}^{#4}{(#3)}}
\newcommand{\Approx}[1]{\mathcal{I}_{#1}}
\newcommand{\bFKovSN}[2]{\mathcal{\rho}_{\Sum{S}{}\!\infLS{}}^{#1}{\,#2}}
\newcommand{\Renyi}[1]{R_{#1}}
\newcommand{\stand}[2]{\tilde{#1}_{#2}}
\newcommand{\parMu}[2]{\mu_{#1}^{#2}}
\newcommand{\parSigma}[2]{\sigma_{#1}^{#2}}
\newcommand{\xEta}[1]{\eta_{\hskip 1pt #1}}
\newcommand{\MeanS}[2]{M_{\Sum{S}{}}^{#1}{\,#2}}
\newcommand{\bMeanS}[2]{\mathcal{M}_{\Sum{S}{}}^{#1}{\hskip 0.6pt#2}}
\newcommand{\bMeanN}[2]{\mathcal{M}_{\infLS{}}^{#1}{\,#2}}
\newcommand{\StDevS}[2]{D_{\Sum{S}{}}^{#1}{\,#2}}

\newcommand{\bStDevS}[2]{\mathcal{D}_{\Sum{S}{}}^{\,#1}{\,#2}}
\newcommand{\bStDevN}[2]{\mathcal{D}_{\infLS{}}^{\,#1}{\,#2}}
\newcommand{\bKompl}[2]{\mathcal{K}_{(#1,#2)}}
\newcommand{\cdF}[1]{F_{#1}}
\newcommand{\pdF}[1]{f_{#1}}
\newcommand\Step[2]{\smallskip{\sl Step\hskip 4pt{#1}}\textbf{\sl:\hskip 4pt#2.}\hskip 4pt}
\newcommand\Steps[1]{\smallskip{\sl Steps\hskip 4pt{#1}}\hskip 4pt}
\newcommand{\RenH}{\mathsf{H}}
\newcommand{\RenU}{\mathsf{U}}
\newcommand{\Polynom}[2]{\mathsf{#1}_{\,#2}}
\newcommand{\simb}[1]{#1}
\newcommand{\kov}[2]{\varkappa_{#1}^{#2}}
\newcommand{\hkov}[2]{\varkappa_{#1}^{\,(#2)}}
\newcommand{\Ugauss}[2]{\varphi_{\left({#1},{#2}\right)}}
\newcommand{\UGauss}[2]{\varPhi_{\left({#1},{#2}\right)}}
\newcommand{\Kompl}[2]{K_{(#1,#2)}}
\newcommand{\Herm}[1]{\mathsf{H}_{#1}}
\newcommand{\Value}[2]{{#1}_{#2}}
\newcommand{\rmCase}[1]{\rm (#1)}
\newcommand{\Block}[3]{{\mathcal #1}_{\hskip 1pt#2}^{#3}}
\newcommand{\LRegt}[1]{\mathcal{J}_{#1}}
\newcommand{\Rline}{\mathsf{R}}
\newcommand{\cX}[1]{\xi_{#1}}
\newcommand{\E}{\mathsf{E}}
\newcommand{\D}{\mathsf{D}}
\newcommand{\tsupLS}[1]{\widetilde{N}_{#1}}
\newcommand{\supLS}[1]{\widehat{N}_{#1}}
\newcommand{\Natural}{\mathsf{N}}
\newcommand{\PRline}{\mathsf{R}^{+}}
\newcommand{\cS}{c^{\ast}}
\newcommand{\T}[1]{T_{#1}}
\newcommand{\X}[1]{X_{#1}}
\newcommand{\Y}[1]{Y_{#1}}
\newcommand{\Sum}[2]{{#1}_{#2}}
\newcommand{\infLS}[1]{N_{#1}}
\newcommand{\McoR}[2]{{\matr{#1}}_{#2}}
\newcommand{\KRm}[3]{{Cor}\big(#1,#2,#3\big)}
\newcommand{\Det}{\mathsf{det}\,}
\newcommand{\matr}[1]{{{#1}}}
\newcommand{\kor}[2]{\rho_{#1}^{#2}}
\newcommand{\KondIbf}[1]{$\boldsymbol{#1}$}
\newcommand{\KondIit}[1]{${#1}$}
\newcommand{\KondIIbf}[2]{$\boldsymbol{#1}_{\boldsymbol{#2}}$}
\newcommand{\KondIIit}[2]{${#1}_{#2}$}
\newcommand{\KondIIIbf}[3]{$\boldsymbol{#1}_{\boldsymbol{(#2,#3)}}$}
\newcommand{\KondIIIit}[3]{${#1}_{(#2,#3)}$}

\begin{document}
\author[Vsevolod K. Malinovskii]{Vsevolod K. Malinovskii}

\keywords{Compound sums, Clustering{\zzz}type and queuing{\zzz}type models,
Distributional limit theorems, Refined approximations, Modular analysis,
Renewal theory, Risk theory, ergodic Markov and semi{\zzz}Markov systems.}

\address{Central Economics and Mathematics Institute (CEMI) of Russian Academy of Science,
117418, Nakhimovskiy prosp., 47, Moscow, Russia}

\email{admin@actlab.ru}

\urladdr{http://www.actlab.ru}

\title[]{\Large REFINED DISTRIBUTIONAL LIMIT THEOREMS\\[2pt]FOR COMPOUND SUMS}

\begin{abstract}{The paper is a sketch of systematic presentation of distributional
limit theorems and their refinements for compound sums. When analyzing, e.g.,
ergodic semi{\zzz}Markov systems with discrete or continuous time, this allows
us to separate those aspects that lie within the theory of random processes
from those that relate to the classical summation theory. All these limit
theorems are united by a common approach to their proof, based on the total
probability rule, auxiliary multidimensional limit theorems for sums of
independent random vectors, and (optionally) modular analysis.}
\end{abstract}

\maketitle

\section{Introduction}

In his review \cite{[Kesten=1977]} of Petrov's book \cite{[Petrov=1975]},
Kesten\footnote{Harry Kesten (1931{\zzz}2019), the Goldwin Smith Professor
Emeritus of Mathematics, ``whose insights advanced the modern understanding of
probability theory and its applications'' (quoted from obituary by Matt
Hayes).} wrote that ``in the period 1920{\zzz}1940 research in probability
theory was virtually synonymous with the study of sums of independent random
variables'', but ``already in the late thirties attention started shifting and
probabilists became more interested in Markov chains, continuous time
processes, and other situations in which the independence between summands no
longer applies, and today the center of gravity of probability theory has moved
away from sums of independent random variables.''

Nevertheless, Kesten continues, ``much work on sums of independent random
variables continues to be done, partly for their aesthetic appeal and partly
for the technical reason that many limit theorems, even for dependent summands,
can be reduced to the case of independent summands by means of various tricks.
The best known of these tricks is the use of regeneration points, or
`D{\oe}blin's trick'. In large part the fascination of the subject is due to
the fact that applications and the ingenuity of mathematicians continue to give
rise to new questions.'' Regarding applications, Kesten mentioned renewal
theory, the theory of optimal stopping, invariance principles and functional
limit theorems, fluctuation theory, and so on.

To summarize, Kesten argues that ``in all the years that these newer phenomena
were being discovered, people also continued working on more direct
generalizations and refinements of the classical limit theorems $\dots$ Many of
the proofs require tremendous skill in classical analysis $\dots$ and many of
the results, such as the Berry{\zzz}Esseen estimate, the Edgeworth expansion
and asymptotic results for large deviations, are important for statistics and
theoretical purposes.''

The paper is an overview that fits into the scheme outlined by Kesten. It is
focussed on refined ``distributional'' (or ``intrinsically
analytic''\footnote{The terms ``intrinsically analytic'' and
``measure{\zzz}theoretic'' applied to limit theorems are introduced in
\cite{[Chow=Teicher=1997]}. The former is equivalent (see, e.g.,
\cite{[Bingham=1989]}) to ``distributional'', or ``weak''.}, or ``weak'') limit
theorems for compound sums which are a natural extension of ordinary sums. They
are known as the key object of renewal theory and the core component of
D{\oe}blin's dissection of an ordinary sum defined on a Markov chain. By
``refined'' limit theorems we mean\footnote{The results for large deviations,
although interesting and available, are not presented here due to the space
limitation.} Berry{\zzz}Esseen's estimates and Edgeworth's expansions mentioned
by Kesten, in contrast to approximations without assessing their accuracy.

In contrast to ``measure{\zzz}theoretic'' limit theorems, for which
Kolmogorov's axiomatization is indispensable and which border on (or lie
within) the theory of random processes, ``distributional'' limit theorems,
including those for dependent random variables, are known long before the
axiomatization. For example, seeking in the 1900s, long before Kolmogorov's
advance, to demonstrate that independence is not a necessary condition in
``distributional'' weak law of large numbers (\cWLLN) and central limit theorem
(\cCLT), Markov introduced the Markov chains.

Without questioning the value of ``measure{\zzz}the\-ore\-tic'' limit theorems,
such as the strong law of large numbers (\cSLLN) and the law of iterated
logarithm (\cLIL), or, going further, the value of the theory of random
processes, we note that ``distributional'' rather than
``measure{\zzz}theoretic'' limit theorems are really needed for statistics,
cluster analysis, etc., where an infinite number of observations, clusters,
renewals, people in queue, and so on, never occur. Even in Bernoulli's
classical scheme, statistical inference is based (see detailed discussion in
\cite{[Chibisov=2016]}) on \WLLN and \CLT with its refinements, and not on
\SLLN and \cLIL.

Dealing with complex compound sums\footnote{See classification of compound sums
below. Modular analysis is not needed for simple compound sums.}, we focus on
modular analysis. Speaking of ``D{\oe}blin's trick'' which reduces ``many limit
theorems, even for dependent summands'' to the case of independent ones, Kesten
meant this approach. Bearing in mind the ladder technique, say ``Blackwell's
trick'' (see \cite{[Blackwell=1953]}), the modular analysis is even more widely
used. We will expand it to compound sums with general modular structure, but
note that (see, e.g., \cite{[Goetze=Hipp=1983]}, \cite{[Herve=Pene=2010]},
\cite{[Malinovskii=2021=b]}, \cite{[Rio=2017]}, \cite{[Tikhomirov=1981]}) this
is merely one, rather than the only and the best, method for studying such
sums.

Technically, ``D{\oe}blin's trick'' (or its counterpart ``Blackwell's trick'')
is not only the use of regeneration (or ladder) points. It implies (see, e.g.,
\cite{[Chung=1967]}) the use of Kolmogorov's inequality for maximum of partial
sums in order to move from D{\oe}blin's (or Blackwell's) dissection to ordinary
sum of independent modular summands. We call this approach ``basic technique''.

Delving deeper, the ``basic technique'' in its original form fails in local
theorems and yields no ``refined'' approximations, such as Berry{\zzz}Esseen's
estimates and Edgeworth's expansions. To be specific, dealing with a local
limit theorem for classical Markov chains, Kolmogorov (see
\cite{[Kolmogorov=1949]}) pointed that ``D{\oe}blin's method in its original
form'' is suitable ``only for proving integral theorems''. We quote\footnote{In
our translation from Russian. Apparently, this article has not been translated
in due course from Russian into English.} more from \cite{[Kolmogorov=1949]},
\S~2: ``$\dots$ the local theorems that form the main content of the present
paper will be obtained $\dots$ with the help of some strengthening of the
method that was developed by D{\oe}blin for proving the integral limit theorem
in the case of an infinite number of states. In order to make the development
of the method clear, we single out $\dots$ a brief exposition of D{\oe}blin's
method in its original form, in which it is suitable only for proving integral
theorems.''

In brief, Kolmogorov's advance in \cite{[Kolmogorov=1949]} was due to a
straightforward use of the total probability formula, or, according to his
terminology, ``basic identity''.

Further progress was made thirty years later in \cite{[Bolthausen=1980]} and in
a series of papers (see, e.g., \cite{[Bolthausen=1982=a]}, \cite{[Hipp=1985]},
\cite{[Malinovskii=1984]}--\cite{[Malinovskii=1991]}) that followed:
Berry{\zzz}Esseen's estimates and Edgeworth's expansions in \CLT for recurrent
Markov chain were obtained by using the total probability rule and the
auxiliary multidimensional limit theorems for sums of independent random
vectors. To distinguish this approach from ``basic technique'', or
``D{\oe}blin's method in its original form'', we call it ``advanced (modular,
if switching to modular summands is needed) technique''.

Being one of the main pillars of the advanced technique, the limit theorems for
sums of independent random vectors (see, e.g., \cite{[Bhattacharya=Rao=1976]},
\cite{[Dubinskaite=1982]}--\cite{[Dubinskaite=1984=b]}) are close to
perfection. In the advanced technique, analytical complexity, which never
disappears but flows from one form to another, is split into the use of,
firstly, the auxiliary theorems where (we quote from Preface of
\cite{[Bhattacharya=Rao=1976]}) ``precision and generality go hand in hand''
and, secondly, tedious but fairly standard classical analysis, e.g.,
approximation of integral sums by appropriate integrals and evaluation of
remainder terms.

Further presentation is arranged as follows. Section~\ref{zdfgerhgdf} is
devoted to genesis and classification of compound sums. In
Section~\ref{srfgfdgher}, we address elementary and refined renewal theorems
and refined limit theorems for cumulated rewards. Having formulated them as
limit theorems for simple compound sums, we outline their proof using advanced
technique instead of renewal equation, Laplace transforms, and Tauberian
theorems commonly used in renewal theory.

In Section~\ref{dstgfgjfgjrf}, we address complex compound sums ``with
irregular summation and independence'' related to the ruin problem of
collective risk theory. To get (refined) normal approximation when such sums
are proper, we use the modular analysis based on Blackwell's ladder idea. To
get (refined) quasi{\zzz}normal approximation when such sums are defective, we
resort to associated random variables. In this context, we touch upon the
inverse Gaussian approximation which deviates somewhat from the main topic of
this paper, but demonstrates that there are more sophisticated methods than
those related to asymptotic normality.

In Section~\ref{saergsdyherd}, seeking to use the full force of modular
analysis combined with advanced technique, we focus on compound sums with
general modular structure. Particular cases of this framework are complex
compound sums ``with irregular summation and dependence'', in particular Markov
dependence. The only case, although of significant practical interest, where
great impediments for asymptotic analysis arise is defective compound sums
``with irregular summation and dependence''. This case, which is tackled by
means of computer{\zzz}intensive analysis, is briefly discussed in
Section~\ref{srfydfuhjfg}. A concluding remark is made in
Section~\ref{rgryhdhf}.

The article, being a review, does not contain complete proofs. Most results are
formulated in a non{\zzz}strict manner, although the necessary references are
given everywhere. Striving for clarity, great emphasis is placed on discussing
the main ideas of the proofs. Overall, the aims of the advance described in
this review are the same as Lo{\`e}ve stated in \cite{[Loeve=1950]} writing on
limit theorems of probability theory: (i) to simplify proofs and forge general
tools out of the special ones, (ii) to sharpen and strengthen results, (iii) to
find general notions behind the results obtained and to extend their domains of
validity.

\section{Genesis and classification of compound sums}\label{zdfgerhgdf}

It was observed (see \cite{[Mikes=1970]}) that ``on the Continent, if people
are waiting at a bus{\zzz}stop, they loiter around in a seemingly vague
fashion. When the bus arrives, they make a dash for it $\dots$ An Englishman,
even if he is alone, forms an orderly queue of one.'' In terms of mathematical
modeling, this is a ``clustering{\zzz}type'' model opposed to a
``queuing{\zzz}type'' model. In the former, there is essentially no flow of
time, and the objects or events of interest are scattered in space. In the
latter, there is a flow of time, and the objects or events of interest form an
ordered queue.

In models of both types, the core is the compound sum
\begin{equation*}
\Sum{S}{\infLS{t}}=\sum_{i=1}^{\infLS{t}}\X{i}
\end{equation*}
with basis $(\T{i},\X{i})$, $i=1,2,\dots$, where $t>0$ and
\begin{equation}\label{xfghmhrey}
\infLS{t}=\inf\Big\{n\geqslant 1:\sum_{i=1}^{n}\T{i}>t\Big\}
\end{equation}
or $+\infty$, if the set is empty\footnote{Recall that $\inf\emptyset=+\infty$,
$\sup\emptyset= 0$.}. When $\infLS{t}=+\infty$, the sum $\Sum{S}{\infLS{t}}$ is
set equal to $+\infty$. For brevity, the real{\zzz}valued random variables
$\X{i}$ are called primary components ($p${\zzz}com\-po\-nents) and $\T{i}$
secondary components ($s${\zzz}com\-po\-nents) of the basis.

If the basis $(\T{i},\X{i})$, $i=1,2,\dots$, consists of independent random
vectors, then $\Sum{S}{\infLS{t}}$ is ``with independence''. Otherwise it is
``with dependence''. Examples of the latter are often found (see, e.g.,
\cite{[Pacheco=Prabhu=Tang=2009]}) in various models with Markov modulation. If
$s${\zzz}com\-po\-nents are positive, then $\Sum{S}{\infLS{t}}$ is ``with
regular summation''. Otherwise, it is ``with irregular summation''.

Apparently, compound sums ``with regular summation and independence'' fully and
distinctly came forth in ``queuing{\zzz}type'' renewal theory and its
ramifications. Compound sums ``with irregular summation and independence''
appeared (see, e.g., \cite{[Malinovskii=2021=c]}) in ruin problem and (see,
e.g., \cite{[Gut=2009]}) in many settings which involve random walks. On the
other hand, D{\oe}blin's dissection, i.e., switching from ordinary sum defined
on a Markov chain to modular simple compound sum ``with regular summation and
independence'', whose basis consists of intervals between regeneration points
and increments at these intervals, is another source of compound sums.

The complexity of ``irregular summation'' is rather transparent: the sums
$\Sum{V}{n}=\sum_{i=1}^{n}\T{i}$, $n=1,2,\dots$, are not progressively
increasing, whence $\{\infLS{t}=n\}$ is equal to $\{\Sum{V}{1}\leqslant
t,\dots,\Sum{V}{n-2}\leqslant t,\Sum{V}{n-1}\leqslant t<\Sum{V}{n}\}$, rather
than $\{\Sum{V}{n-1}\leqslant t<\Sum{V}{n}\}$, as would be for ``regular
summation''.

In ``dependence complexity'' case, the model has to be fleshed out. In the
search of non{\zzz}trivial examples which illustrate the general results, we
focus on Markov dependence and its extensions, but in fact any type of
dependence which allows an embedded modular structure with independent modules
suits us well. In terms of random processes, much of this topic is based on
successive ``starting over'', as time goes on, commonly called ``regeneration''
or (see \cite{[Kingman=1972]}, \cite{[Pacheco=Prabhu=Tang=2009]})
``regenerative phenomena''. With this approach, many cases that go beyond the
scope of Markov theory fall within the scope of this paper.

\begin{remark}\label{srgtfdhnfdhn}
Beyond the scope of this paper are the sums of the following two types. First,
the random sums $\Sum{S}{N}$ with $N$ and $\X{1},\X{2},\dots$ not necessarily
independent of each other, but with $N$ not of the form \eqref{xfghmhrey} or
the similar. Second, the random sums $\Sum{S}{N}=\sum_{i=1}^{N}\X{i}$ with
independence between $N$ and $\X{1},\X{2},\dots$. The former were introduced in
\cite{[Anscombe=1952]}, \cite{[Renyi=1957]}, \cite{[Renyi=1960]}, and
investigated, e.g., in \cite{[Gut=2009]}. The latter were introduced in
\cite{[Robbins=1948]} and thoroughly studied (see, e.g.,
\cite{[Gnedenko=Korolev=1996]}) in many works.\qed\end{remark}

\subsection{Proper and defective compound sums}\label{aergeryhr}

\begin{figure}[t]
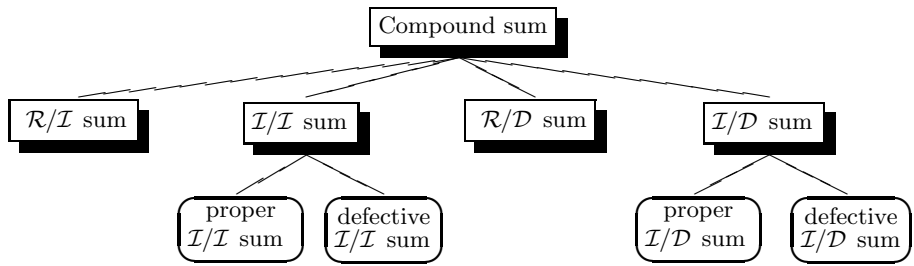

{\small\begin{center} \setlength{\GapWidth}{8pt}
\begin{bundle}{{
\newenvironment{Boxedminipage}
{\begin{Sbox}\begin{minipage}} {\end{minipage}\end{Sbox}\shadowbox{\TheSbox}}
\begin{Boxedminipage}{64pt}{\center{Compound sum}}\end{Boxedminipage}}}
\chunk{\shadowbox{
\newenvironment{Boxedminipage}
{\begin{Sbox}\begin{minipage}} {\end{minipage}\end{Sbox} {\TheSbox}}
\begin{Boxedminipage}{39pt}{\center{\mRI{\zzzx}sum}}\end{Boxedminipage}}}
\chunk{\begin{bundle}{\newenvironment{Boxedminipage}
{\begin{Sbox}\begin{minipage}} {\end{minipage}\end{Sbox}\shadowbox{\TheSbox}}
\begin{Boxedminipage}{37pt}
{\center{\mII{\zzzx}sum}}
\end{Boxedminipage}}
\chunk{\newenvironment{Boxedminipage}{\begin{Sbox}\begin{minipage}}
{\end{minipage}\end{Sbox} \Ovalbox{\TheSbox}}
\begin{Boxedminipage}{40pt}
{\center{proper\\[0pt]\mII{\zzzx}sum}}
\end{Boxedminipage}}
\chunk{\newenvironment{Boxedminipage}{\begin{Sbox}\begin{minipage}}
{\end{minipage}\end{Sbox}\Ovalbox{\TheSbox}}
\begin{Boxedminipage}{37pt}
{\center{defective\\[-2pt]\mII{\zzzx}sum}}
\end{Boxedminipage}}
\end{bundle}}
\chunk{\shadowbox{
\newenvironment{Boxedminipage}
{\begin{Sbox}\begin{minipage}} {\end{minipage}\end{Sbox} {\TheSbox}}
\begin{Boxedminipage}{40pt}{\center{\mRD{\zzzx}sum}}\end{Boxedminipage}}}
\chunk{\begin{bundle}{\newenvironment{Boxedminipage}
{\begin{Sbox}\begin{minipage}} {\end{minipage}\end{Sbox}\shadowbox{\TheSbox}}
\begin{Boxedminipage}{39pt}
{\center{\mID{\zzzx}sum}}\end{Boxedminipage}}
\chunk{\newenvironment{Boxedminipage}{\begin{Sbox}\begin{minipage}}
{\end{minipage}\end{Sbox} \Ovalbox{\TheSbox}}
\begin{Boxedminipage}{40pt}
{\center{proper\\[0pt]\mID{\zzzx}sum}}
\end{Boxedminipage}}
\chunk{\newenvironment{Boxedminipage}{\begin{Sbox}\begin{minipage}}
{\end{minipage}\end{Sbox} \Ovalbox{\TheSbox}}
\begin{Boxedminipage}{38pt}
{\center{defective\\[-2pt]\mID{\zzzx}sum}}
\end{Boxedminipage}}
\end{bundle}}
\end{bundle}
\end{center}}
\vskip 0pt\caption{Classification of compound sums.}\label{asrdtgyjhtty}
\end{figure}

The classification of compound sums, where shorthand for ``compound sum with
regular summation and independence'' is ``\mRI{\zzzx}sum'', for ``compound sum
with irregular summation and independence'' is ``\mII{\zzzx}sum'', for
``compound sum with regular summation and dependence'' is ``\mRD{\zzzx}sum'',
and for ``compound sum with irregular summation and dependence'' is
``\mID{\zzzx}sum'', is shown in Fig.~\ref{asrdtgyjhtty}. Under natural
regularity conditions, compound sums ``with regular summation'' are always
proper, while compound sums ``with irregular summation'' are either proper, or
defective\footnote{Recall (see \cite{[Feller=1971]}) that a random variable is
defective, with defect $1-p$, if it takes the value $\infty$ with probability
$p>0$. Otherwise, it is proper.}.

The following example shows that finding the defect of \mII{\zzzx}sum is not
easy.

\begin{example}\label{zesrfgrfh}
Let $(\X{i},\Y{i})$, $i=1,2,\dots$, be a subsidiary sequence of \iid random
vectors, whose components are positive and independent of each other. For $c>0$
and $\T{i}=\Y{i}-c\X{i}$, $\Sum{S}{\infLS{t}}=\sum_{i=1}^{\infLS{t}}\X{i}$ is
positive \mII{\zzzx}sum with basis $(\T{i},\X{i})$, $i=1,2,\dots$. While the
components of the subsidiary sequence are independent of each other, the
components of the basis are dependent on each other. If $\X{i}$ and $\Y{i}$,
$i=1,2,\dots$, are exponentially distributed with positive parameters $\paramX$
and $\paramY$, then (see, e.g., \cite{[Malinovskii=2021=b]}, Theorem~5.8)
\begin{equation}\label{sadfgbndfn}
\P\big\{\Sum{S}{\infLS{t}}\leqslant
x\big\}=\P\big\{\Sum{S}{\infLS{t}}<\infty\big\}-\frac{1}{\pi}\int_0^{\pi}f_{}(z,t,x)\,dz,
\end{equation}
where
\begin{equation}\label{34563745u856}
\P\big\{\Sum{S}{\infLS{t}}<\infty\big\}=\begin{cases}
1,&\paramX/(c\paramY)\geqslant 1,
\\[8pt]
\dfrac{\paramX}{c\paramY}\,\exp\big\{-t(c\paramY-\paramX)/c\big\},&\paramX/(c\paramY)<1
\end{cases}
\end{equation}
and
\begin{equation}\label{sargrsdyhe}
\begin{aligned}
f_{}(z,t,x)=&\,(\paramX/(c\paramY))(1+\paramX/(c\paramY)-2\sqrt{\paramX/(c\paramY)}\cos
z)^{-1}
\\[0pt]
&\times\exp\big\{t\paramY\,(\sqrt{\paramX/(c\paramY)}\cos z-1)
\\[0pt]
&-x\paramX
(c\paramY/\paramX)(1+\paramX/(c\paramY)-2\sqrt{\paramX/(c\paramY)}\cos z)\big\}
\\[0pt]
&\times(\cos(t\paramY\sqrt{\paramX/(c\paramY)}\sin z)
-\cos(t\paramY\sqrt{\paramX/(c\paramY)}\sin z+2z)).
\end{aligned}
\end{equation}
Consequently, $\Sum{S}{\infLS{t}}$ is proper if $\paramX/(c\paramY)\geqslant
1$, which is equivalent to $c\leqslant\cS=\E{Y}/\E{X}$, and defective
otherwise. Defect of $\Sum{S}{\infLS{t}}$ is yielded in \eqref{34563745u856}.
\qed\end{example}

Although the expression \eqref{34563745u856} for defect of $\Sum{S}{\infLS{t}}$
is complicated\footnote{This is a particular case of Cram{\'e}r's famous result
on the probability of ultimate ruin.}, the conditions under which
$\Sum{S}{\infLS{t}}$ is proper or defective, are simple. On the one hand, if
$\E\T{1}>0$, then (see Theorem~2 in \cite{[Feller=1971]}, Chapter~XII) the
ordinary sums $\Sum{V}{n}$ increase \as to $+\infty$, as $n\to\infty$.
Consequently, for any $t$ the summation limit $\infLS{t}$ is \as finite and
$\Sum{S}{\infLS{t}}$ is \as less than $+\infty$, whence proper. On the other
hand, if $\E\T{1}<0$, then the ordinary sums $\Sum{V}{n}$ decrease \as to
$-\infty$, as $n\to\infty$. With a positive probability, the maximum
$M=\max_{n\in\Natural}\Sum{V}{n}$ is finite and less than $t>0$ large enough,
whence $\infLS{t}$ and $\Sum{S}{\infLS{t}}$ are defective.

\subsection{Renewal theory and species of compound sums}\label{e56u6iuty}

In renewal theory, $p${\zzz}com\-po\-nents $\X{i}$, $i=1,2,\dots$, (typically
positive) are called ``rewards in the moments of renewals'' and
$s${\zzz}com\-po\-nents $\T{i}$, $i=1,2,\dots$, (always positive) are called
``intervals between successive renewals''. This interpretation yields a variety
of species of \mRI{\zzzx}sums. In particular, the total number of renewals
before time $t$, defined as $\tsupLS{t}=\sup\{n\geqslant 1:\Sum{V}{n}\leqslant
t\}$, or $0$, if $\T{1}>t$, comes to the fore. Focus shifts from $\infLS{t}$ to
$\tsupLS{t}$, which coincides with $\supLS{t}=\infLS{t}-1$ trivially related to
$\infLS{t}$.

When moving from relatively simple \mRI{\zzzx}sums to more complicated \mII\
and \mID{\zzzx}sums, both of them with ``irregular summation'', the difference
between altered (with summation limit $\tsupLS{t}$) and non{\zzz}altered (with
summation limit $\infLS{t}$) compound sums becomes non{\zzz}tri\-vial:
$\tsupLS{t}$ is no more trivially related to $\infLS{t}$ and the altered
compound sums $\Sum{S}{\tsupLS{t}}=\sum_{i=1}^{\tsupLS{t}}\X{i}$ and
$\Sum{S}{\supLS{t}}=\sum_{i=1}^{\supLS{t}}\X{i}$ are not bounded to coincide.

As for \mRD{\zzzx}sums with Markov dependence, which generalize \mRI{\zzzx}sums
in the same way as Markov renewal theory generalizes renewal theory, dependence
is introduced by embedded Markov chain $\{\cX{i}\}_{i\geqslant 0}$. In such
\mRD{\zzzx}sums, $s${\zzz}com\-po\-nents $\T{i}$, $i=1,2,\dots$, are the time
intervals between jumps of $\{\cX{i}\}_{i\geqslant 0}$ and $p${\zzz}components
$\X{i}$, $i=1,2,\dots$, which depend on $\{\cX{i}\}_{i\geqslant 0}$, are
rewards in the moments of renewals.

In this way, the variety of species of compound sums echoes the diversity of
renewal processes. There are\footnote{We refer to~\cite{[Cox=1970]},
Section~2.2, or \cite{[Cox=Miller=2001]}, Section~9.2. There are terminological
collisions, e.g., in \cite{[Karlin=Taylor=1975]}, Chapter~5, Section~7, this
process is called delayed renewal process.} modified, equilibrium, and ordinary
renewal processes. If $\T{1}$ is distributed differently than $\T{i}\eqD\T{}$,
$i=2,3,\dots$, then the renewal process is called modified. When all intervals
between renewals, including the first one, are identically distributed, the
renewal process is called ordinary. Furthermore, if
\begin{equation*}
\P\big\{\T{1}\leqslant
x\big\}=\frac{1}{\E\T{}}\int_{0}^{x}\big(1-\cdF{T}(z)\big)\,dz,\quad x>0,
\end{equation*}
where $\cdF{T}$ denotes \cdf of $\T{}$, then it is called equilibrium, or
stationary.

Regarding compound sums, modified are those in which a finite number of
elements of the basis $(\T{i},\X{i})$, $i=1,2,\dots$, e.g., the first element
$(\T{1},\X{1})$, differ from all the others. Unlike in renewal theory which
models technical repairable systems where the first repair does not necessarily
occur at the starting time zero, this modification looks awkward for
\mRI{\zzzx}sums: it boils down to a very special case of non{\zzz}identically
distributed summands. The similar modification of \mRD{\zzzx}sums with Markov
dependence comes down to the choice of the initial distribution of the embedded
Markov chain $\{\cX{i}\}_{i\geqslant 0}$ and is much more sensible.

It is noteworthy that, under natural regularity conditions, switching within
the species of compound sums, e.g., from altered to unaltered compound sums and
vice versa, does not affect the main{\zzz}term approximations, but affects
refinements such as Edgeworth's expansions.

\section{Renewal theory and limit theorems for \bmRI{\zzzx}sums}\label{srfgfdgher}

Founded (see \cite{[Feller=1940]}, \cite{[Feller=1941]}) in the 1940s as a
theoretical insight into technical repairable systems, the classical renewal
theory is focussed on ``queuing{\zzz}type'' models. The term ``renewal'', which
displaced the formerly used term ``industrial replacement'' coined (see
\cite{[Lotka=1939]}) in the 1930s by Lotka, is widely used nowadays, but was
not the main or default term even in Feller's seminal paper
\cite{[Feller=1949]} dated 1949.

In 1948, Doob \cite{[Doob=1948]} noted that ``renewal theory is ordinarily
reduced to the theory of certain types of integral equations $\dots$ However,
it is to be expected that a treatment in terms of the theory of probability,
which uses the modern developments of this theory, will shed new light on the
subject.'' Feller agreed (see~\cite{[Feller=1971]}, Chapter~VI) that in renewal
theory ``analytically, we are concerned merely with sums of independent
positive variables.''

\subsection{First appearance of the advanced technique}\label{erteruud}
In words, the renewal function\footnote{The asterisk denotes the convolution
operator.} $\RenU(t)=\sum_{n=0}^{\infty}\cdF{T}^{*n}(t)$, $t>0$, where
$\cdF{T}$ denotes \cdf of a positive random variable $\T{}$, is the expected
number of renewals in the time interval $(0,t)$, with the origin counted as
renewal epoch\footnote{In \cite{[Cox=Miller=2001]},
\cite{[Karlin=Taylor=1975]}, the renewal function is defined as
$\RenH(t)=\RenU(t)-1$, $t>0$ (or $\E\supLS{t}$), which differs from $\RenU(t)$
by one. In \cite{[Feller=1971]}, the function $\RenU(t)$, $t>0$, is called the
(ordinary) renewal process. In \cite{[Cox=Miller=2001]},
\cite{[Karlin=Taylor=1975]}, the renewal process is referred to as the
continuous{\zzz}time random process $\{\supLS{t}\}_{t\geqslant 0}$, i.e., the
random number of renewals in the time interval $(0,t)$, with the origin not
being counted as a renewal epoch.}. As a formula, this is
$\RenU(t)=1+\E\supLS{t}$. Together with altered \mRI{\zzzx}sum
$\Sum{S}{\supLS{t}}$, called in renewal theory ``cumulated rewards'', it is
built on \mRI{\zzzx}basis $(\T{i},\X{i})\eqD(\T{},\X{})$, $i=1,2,\dots$, with
\iid components and $\T{}$ positive.

The elementary renewal theorem (see, e.g., \cite{[Gut=2009]}), states that for
$0<\E\,\T{}\leqslant\infty$ the approximation\footnote{When $\E\,\T{}=\infty$,
the ratio $t/\E\,\T{}$ is replaced by $0$.}
\begin{equation}\label{xfghfdgds}
\E\supLS{t}=\frac{t}{\E\,\T{}}+\simb{o}(t),\quad t\to\infty,
\end{equation}
holds. The corresponding expansion up to vanishing term is
\begin{equation}\label{we567u65i}
\E\supLS{t}=\frac{t}{\E\,\T{}}+\frac{\D\T{}-(\E\,\T{}\,)^{2}}{2\,(\E\,\T{}\,)^{2}}
+\simb{o}(1),\quad t\to\infty,
\end{equation}
called refined elementary renewal theorem.

The elementary renewal theorem is widely known: see, e.g., Equality (3) in
\cite{[Cox=1970]}, Section~4.2, or Equality (17) in \cite{[Cox=Miller=2001]},
Section~9.2, or main formula in point (b) in \cite{[Karlin=Taylor=1975]},
Chapter~5, Section~6. In \cite{[Cox=Miller=2001]}, p.~345, it is noted that ``a
rigorous proof $\dots$ is possible under very weak assumptions about the
distribution of $T$, but requires difficult Tauberian arguments and will not be
attempted here''. It can also be obtained (with some caution with regard to the
definition of renewal function) from Theorem~1 in \cite{[Feller=1971]},
Chapter~XI, the proof of which is based on the renewal equation.

The refined elementary renewal theorem and the similar results for
higher{\zzz}order power moments of $\supLS{t}$ and $\Sum{S}{\supLS{t}}$,
including expansions up to vanishing terms, were studied (see, e.g.,
\cite{[Cox=1970]}, Section~4.5, and \cite{[Alsmeyer=1988]}) by various methods.

We outline a proof (see details in \cite{[Malinovskii=2021=b]}, Chapter~4) that
does not compete with the proofs mentioned above in the sense of elegance or
minimality of conditions. The advantage of this proof, based on the use of
refined \CLT and consisting of steps ~A--F, is that it will be routinely
carried over to much more complex settings.

\Step{A}{use of fundamental identity}
The identity
\begin{equation}\label{e56u7i67rt}
\begin{aligned}
\E\supLS{t}=&\,\sum_{n=1}^{\infty}n\;\P\big\{\supLS{t}=n\big\}
=\sum_{n=1}^{\infty}n\;\P\bigg\{\sum_{i=1}^{n}\T{i}\leqslant
t<\sum_{i=1}^{n+1}\T{i}\bigg\}
\\[-2pt]
=&\,\sum_{n=1}^{\infty}n\big(\cdF{T}^{*n}(t)-\cdF{T}^{*(n+1)}(t)\big)
\end{aligned}
\end{equation}
is obvious. If the probability density function (\cpdf) $\pdF{T}$ exists (we
assume this for simplicity of presentation), then \eqref{e56u7i67rt} can be
rewritten as
\begin{equation}\label{sadfgbnmcv}
\begin{aligned}
\E\supLS{t}=\sum_{n=1}^{\infty}n\int_{0}^{t}\pdF{T}^{*n}(t-z)
\,\P\big\{\T{n+1}>z\big\}\,dz.
\end{aligned}
\end{equation}

\Step{B}{reduction of range of summation}
At this step, we cut out from the sum on the right side of \eqref{e56u7i67rt}
those terms that correspond to small $n$ (i.e., $n<n_{t}$ with properly
selected $n_{t}\to\infty$, as $t\to\infty$). This is intuitively clear: for
large $t$, the random variable $\supLS{t}$ is likely to be large. Therefore,
the probabilities $\P\big\{\supLS{t}=n\big\}$ for small $n$ are small.
Technically, it is done using the well{\zzz}known probability inequalities,
such as Markov's inequality.

\Step{C}{reduction of range of integration}
At this step, relying on the moment conditions, we cut out from the integral in
\eqref{sadfgbnmcv} that part that corresponds to large $z$ (i.e., $z>z_{t}$
with properly selected $z_{t}\to\infty$, as $t\to\infty$). This is also
intuitively clear: any single random variable, including $\T{n+1}$, is small
compared to the sum $\sum_{i=1}^{n}\T{i}$, as $n$ is sufficiently large.

\Step{D}{application of refined \cCLT}
Writing $\parMu{T}{}=\E\,\T{}$, $\parSigma{T}{2}=\D\,\T{}$ and switching to
standardized random variables
$\stand{T}{i}=(\T{i}-\parMu{T}{})/\parSigma{T}{}$, $i=1,2,\dots$, we have
\begin{equation*}
\pdF{T}^{*n}(t-z)=\frac{1}{\parSigma{T}{}\sqrt{n}}\,\pdF{\,n^{-1/2}\sum_{i=1}^{n}
\stand{T}{i}}\big(\HNode{T}{n}{t-z}{}\big),
\end{equation*}
where $\HNode{T}{n}{t-z}{}=(t-z-\parMu{T}{}n)/(\parSigma{T}{}\sqrt{n})$.
Assuming that natural moment conditions are satisfied, we apply
Berry{\zzz}Esseen's estimate (see Theorem~11 in \cite{[Petrov=1975]},
Chapter~VII, \S~2) in the proof of \eqref{xfghfdgds} and Edgeworth's expansion
(see Theorem~17 in \cite{[Petrov=1975]}, Chapter~VII, \S~3) in the proof of
\eqref{we567u65i}. In both cases, these results are taken with non{\zzz}uniform
remainder terms.

\Step{E}{analysis of approximating term}
In the proof of \eqref{xfghfdgds}, where Berry{\zzz}Esseen's estimate was used,
the approximating term is
\begin{equation*}
\Approx{t}=\frac{1}{\parSigma{T}{}}\sum_{n>n_{t}}n^{1/2}
\int_{0}^{z_{t}}\Ugauss{0}{1}\big(\HNode{T}{n}{t-z}{}\big)
\,\P\big\{\T{}>z\big\}\,dz,
\end{equation*}
where $\Ugauss{0}{1}$ denotes \pdf of standard normal distribution. In the
proof of \eqref{we567u65i}, where Edgeworth's expansion was used, the
approximating term is the sum of
\begin{equation*}
\begin{aligned}
\SeqApprox{t}{\dag}=&\,\frac{1}{\parSigma{T}{}}\sum_{n>n_{t}}\sqrt{n}
\int_{0}^{z_{t}} \Ugauss{0}{1}(\HNode{T}{n}{t-z}{})\,\P\big\{\T{}>z\big\}\,dz
\\[0pt]
\SeqApprox{t}{\ddag}=&\,\frac{1}{\parSigma{T}{}}\frac{\E(\T{}^{3})}{6\,\parSigma{T}{3}}
\sum_{n>n_{t}}\int_{0}^{z_{t}}\,\big(\HNode{T}{n}{t-z}{3}
-3\,\HNode{T}{n}{t-z}{}\big)
\\[0pt]
&\times\Ugauss{0}{1}\big(\HNode{T}{n}{t-z}{}\big)\,\P\big\{\T{}>z\big\}\,dz.
\end{aligned}
\end{equation*}

Using Riemann summation formula with nodal points
$\HNode{T}{n}{t}{}{}<\HNode{T}{n-1}{t}{}{}<\dots<\HNode{T}{2}{t}{}{}<\HNode{T}{1}{t}{}{}$,
we seek to reduce $\Approx{t}$ to ${t}/{\E\,\T{}}$ with the required accuracy,
and $\SeqApprox{t}{\dag}+\SeqApprox{t}{\ddag}$ to
$t/\E\,\T{}+(\D\T{}-(\E\,\T{})^{2})/(2\,(\E\,\T{})^{2})$ with the required
accuracy. This is a tedious but fairly standard classical analysis left to the
reader.

\Step{F}{analysis of remainder term}
In the proof of \eqref{xfghfdgds}, the remainder term is
\begin{equation*}
\RemTerm{t}=\const{C}{1}{}\sum_{n>n_{t}}\delta_{n}\int_{0}^{z_{t}}
\big(1+|\,\HNode{T}{n}{t-z}{}\,|\,\big)^{-3}\,\P\big\{\T{}>z\big\}\,dz.
\end{equation*}
In the proof of \eqref{we567u65i}, the remainder term is
\begin{equation*}
\begin{aligned}
\RemTerm{t}=&\,\const{C}{2}{}\sum_{n>n_{t}}\frac{\delta_{n}}{\sqrt{n}}\int_{0}^{z_{t}}
\big(1+\big|\,\HNode{T}{n}{t-z}{}\,\big|\,\big)^{-4}\,\P\big\{\T{}>z\big\}\,dz,
\end{aligned}
\end{equation*}
with $\delta_{n}\to 0$, as $n\to\infty$. The former is $\simb{o}(t)$,
$t\to\infty$. The latter is $\simb{o}(1)$, $t\to\infty$, which is shown by
direct analytical methods. This is also a tedious but fairly standard classical
analysis left to the reader.

\subsection{Refined limit theorems for cumulated rewards}\label{srgrthjfg}

Moving from renewal function (or power moments of $\supLS{t}$) to the
distribution of altered \mRI{\zzzx}sum
$\Sum{S}{\supLS{t}}=\sum_{i=1}^{\supLS{t}}\X{i}$, called in renewal theory
``cumulated rewards'', we use the shorthand notation $\parMu{X}{}=\E\X{}$,
$\parSigma{X}{2}=\D\X{}$,
$\hkov{XT}{i,j}=\E((\X{}-\parMu{X}{})^{i}(\T{}-\parMu{T}{})^{j})$ for $i$ and
$j$ integers, and $\kov{XT}{}=\hkov{XT}{1,1}$. We put
\begin{equation*}
\MeanS{}{\!}=\parMu{X}{}\parMu{T}{-1},\quad
\StDevS{2}{}=\E\,(\parMu{X}{}T-\parMu{T}{}X)^{2}\parMu{T}{-3}.
\end{equation*}
By $\UGauss{0}{1}(x)$, we denote \cdf of standard normal distribution.

\begin{theorem}[Berry{\zzz}Esseen's estimate]\label{drgteghrweher}
If \pdf of $\T{}$ is bounded above by a finite constant\footnote{In
Theorem~\ref{drgteghrweher}, this condition is excessive. We repeat that we do
not strive for maximum generality (or rigor) in this presentation.},
$\StDevS{2}{}>0$, and $\E(\T{}^{3})<\infty$, $\E(\X{}^{3})<\infty$, then
\begin{equation*}
\sup_{x\in\Rline}\,\Big|\,\P\big\{\Sum{S}{\supLS{t}}-\MeanS{}{t}\leqslant
x\,\StDevS{}{t^{1/2}}\big\}-\UGauss{0}{1}(x)\,\Big|=\simb{O}\big(t^{-1/2}\big),\quad
t\to\infty.
\end{equation*}
\end{theorem}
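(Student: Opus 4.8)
The plan is to mimic the six–step scheme A–F used above for the refined elementary renewal theorem, but now tracking the joint behaviour of $\supLS{t}$ and the reward sum $\Sum{S}{\supLS{t}}=\sum_{i=1}^{\supLS{t}}\X{i}$. The fundamental identity (Step A) becomes
\begin{equation*}
\P\big\{\Sum{S}{\supLS{t}}\leqslant y\big\}
=\sum_{n=1}^{\infty}\P\bigg\{\sum_{i=1}^{n}\X{i}\leqslant y,\
\sum_{i=1}^{n}\T{i}\leqslant t<\sum_{i=1}^{n+1}\T{i}\bigg\},
\end{equation*}
and conditioning on $\T{n+1}=z$ turns each summand into an integral over $z\in(0,\infty)$ of $\P\{\T{}>z\}$ times the joint density of $\big(\sum_{i=1}^{n}\X{i},\sum_{i=1}^{n}\T{i}\big)$ evaluated at $(y,t-z)$. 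As in Steps B and C, the moment hypotheses $\E(\T{}^{3})<\infty$, $\E(\X{}^{3})<\infty$ let us discard the terms with $n<n_{t}$ (via Markov's inequality, since $\supLS{t}\sim t/\parMu{T}{}$) and the part of the integral with $z>z_{t}$, for suitable $n_{t},z_{t}\to\infty$, at a cost of $O(t^{-1/2})$.

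The heart of the argument (Step D) is to replace the joint density of the bivariate random walk $\big(\sum_{i=1}^{n}(\X{i}-\parMu{X}{}),\sum_{i=1}^{n}(\T{i}-\parMu{T}{})\big)$ by the bivariate normal density with covariance matrix built from $\parSigma{X}{2},\parSigma{T}{2},\kov{XT}{}$, using a \emph{two–dimensional local Berry--Esseen theorem} for sums of i.i.d.\ random vectors — this is exactly the "auxiliary multidimensional limit theorem" advertised in the introduction (cf.\ \cite{[Bhattacharya=Rao=1976]}), and it is available with a non–uniform remainder of order $n^{-1/2}$ provided the vector $(\X{},\T{})$ has a bounded density and three finite moments (the boundedness of $\pdF{T}$ is assumed; that of the joint density, or at worst a smoothing/Cramér condition, is the mild extra hypothesis the footnote alludes to). After this substitution one integrates out $z$ against $\P\{\T{}>z\}$ and sums over $n>n_{t}$; the resulting main term is an integral–sum which, by the Riemann–sum comparison with nodal points $\HNode{T}{n}{t}{}$ as in Step E, converges to the Gaussian c.d.f.\ $\UGauss{0}{1}$ after the affine rescaling by $\MeanS{}{t}$ and $\StDevS{}{t^{1/2}}$. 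A short computation identifies the limiting variance: a single renewal-cycle fluctuation contributes $\X{}-\MeanS{}{}\T{}=\parMu{T}{-1}(\parMu{T}{}\X{}-\parMu{X}{}\T{})$ per unit of "time", and dividing by $\parMu{T}{}$ for the number of cycles up to $t$ yields exactly $\StDevS{2}{}=\E(\parMu{X}{}T-\parMu{T}{}X)^{2}\parMu{T}{-3}$.

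Finally (Step F) the error terms are collected: the truncation errors from Steps B–C, the $O(n^{-1/2})$ local-CLT remainders summed over $n>n_{t}$ (which, after integration against $\P\{\T{}>z\}\,dz$ and using the non–uniform factor $(1+|\HNode{T}{n}{t-z}{}|)^{-3}$, contribute $O(t^{-1/2})$), and the Riemann–sum discretization error, all of which are $O(t^{-1/2})$ uniformly in $x$. The main obstacle is the second paragraph: securing a two–dimensional \emph{local} limit theorem with a non-uniform, $n^{-1/2}$-rate remainder for the vector $(\X{},\T{})$ — one must either assume enough smoothness of its joint law or insert a smoothing step — and then controlling the interplay between this local estimate and the $z$–integration uniformly in the "location" variable $y=x\,\StDevS{}{t^{1/2}}+\MeanS{}{t}$, which is precisely the kind of tedious-but-standard classical analysis the preceding steps leave to the reader.
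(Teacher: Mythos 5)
Your proposal follows essentially the same route as the paper's proof: the total probability identity of Step~A, truncation in $n$ and $z$ (Steps~B--C), application of a two\zzz dimensional refined \CLT with non\zzz uniform remainder (Step~D), and Riemann\zzz sum analysis of the approximating and remainder terms (Steps~E--F). The only point worth noting is that the paper invokes a \emph{hybrid integro\zzz local} \CLT \zzz\ distribution function in the $\X{}$\zzz coordinate, density only in the $\T{}$\zzz coordinate \zzz\ so that only boundedness of $\pdF{T}$ is needed, rather than a joint density of $(\X{},\T{})$ or an additional smoothing step as in your fully local variant.
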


\begin{theorem}[Edgeworth's expansion]\label{drsthtjmkfh}
If the conditions of Theorem~\ref{drgteghrweher} are satisfied and
$\E(\X{}^{k})<\infty$, $\E(\T{}^{k})<\infty$, then there exist polynomials
$\Polynom{Q}{r}(x)$, $r=1,2,\dots,k-3$, of degree $3r-1$, such that
\begin{equation*}
\begin{aligned}
&\sup_{x\in\Rline}\bigg|\,\P\big\{\Sum{S}{\supLS{t}}-\MeanS{}{t}\leqslant
x\StDevS{}{t^{1/2}}\big\}-\UGauss{0}{1}(x)
\\[-4pt]
&\hskip 90pt
-\sum_{r=1}^{k-3}\frac{\Polynom{Q}{r}(x)}{t^{r/2}}\,\,\Ugauss{0}{1}(x)\bigg|
=\simb{O}\big(t^{-(k-2)/2}\big),\quad t\to\infty.
\end{aligned}
\end{equation*}
In particular,
\begin{equation*}
\Polynom{Q}{1}(x)=-\frac{1}{6}\,\big(\Kompl{3}{0}\Herm{2}(x)+3\hskip
0.2pt\Value{I}{1}\big),
\end{equation*}
where $\Herm{2}(x)=x^{2}-1$ is Chebyshev{\zzz}Her\-mite's polynomial,
\begin{equation*}
\begin{aligned}
\Kompl{3}{0}=&\,\big(\big(\hkov{XT}{3,0}-3\parSigma{X}{2}\kov{XT}{}\parMu{T}{-1}+
6\parSigma{X}{2}\parSigma{T}{2}\parMu{T}{}\parMu{X}{-2}
-6\parSigma{T}{2}\kov{XT}{}\parMu{X}{2}\parMu{T}{-3}\big)\,
\parMu{T}{-1}
\\[2pt]
&-3\,\big(\hkov{XT}{2,1}-2\kov{XT}{2}\parMu{T}{-1}
+\parSigma{X}{2}\parSigma{T}{2}\parMu{T}{-1}\big)\,
\parMu{T}{}\parMu{X}{-2}
\\[2pt]
&+3\,\big(\hkov{XT}{1,2}-\parSigma{T}{2}\kov{XT}{}\parMu{T}{-1}\big)\,\parMu{X}{2}\parMu{T}{-3}
\\[2pt]
&-\big(\hkov{XT}{0,3}-3\parSigma{T}{4}\parMu{T}{-1}\big)\,\parMu{X}{3}\parMu{T}{-4}\big)\,\StDevS{-3}{},
\end{aligned}
\end{equation*}
and
$\Value{I}{1}=\parMu{X}{}\big(\parSigma{T}{2}\,\parMu{T}{-2}+1\big)\,\StDevS{-1}{}$.
\end{theorem}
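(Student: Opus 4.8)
The plan is to follow the same six{\zzz}step scheme A--F outlined above for the refined elementary renewal theorem, but now carrying along the joint behaviour of the pair $\big(\sum_{i=1}^{n}\X{i},\sum_{i=1}^{n}\T{i}\big)$ rather than the $s${\zzz}components alone, and using a multidimensional Edgeworth expansion in place of the one{\zzz}dimensional one. First, by the total probability rule (Step~A), I would write
\begin{equation*}
\P\big\{\Sum{S}{\supLS{t}}-\MeanS{}{t}\leqslant x\StDevS{}{t^{1/2}}\big\}
=\sum_{n=1}^{\infty}\P\bigg\{\sum_{i=1}^{n}\X{i}\leqslant\MeanS{}{t}+x\StDevS{}{t^{1/2}},\;
\sum_{i=1}^{n}\T{i}\leqslant t<\sum_{i=1}^{n+1}\T{i}\bigg\},
\end{equation*}
and, conditioning on $\T{n+1}=z$ (assuming a density for simplicity, as in the excerpt), reduce each summand to an integral over $z$ of $\P\{\T{}>z\}$ times the joint density of $\big(\sum_{i=1}^{n}\X{i},\sum_{i=1}^{n}\T{i}\big)$ evaluated on the slab $\{\sum\T{i}=t-z\}$. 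Steps~B and C then cut the sum to $n>n_{t}$ and the integral to $z<z_{t}$ for suitable $n_{t},z_{t}\to\infty$, using Markov{\zzz}type inequalities and the moment hypotheses $\E(\X{}^{k}),\E(\T{}^{k})<\infty$; these truncations contribute only $\simb{O}(t^{-(k-2)/2})$.

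For Step~D I would standardize the centred vectors $(\X{i}-\parMu{X}{},\T{i}-\parMu{T}{})$ by their covariance matrix (which is nondegenerate once $\StDevS{2}{}>0$, since $\StDevS{2}{}$ is, up to the factor $\parMu{T}{-3}$, the variance of the linear combination $\parMu{X}{}\T{}-\parMu{T}{}\X{}$ that governs the fluctuation of $\Sum{S}{\supLS{t}}$ around $\MeanS{}{t}$) and invoke the multidimensional Edgeworth expansion for sums of \iid random vectors with non{\zzz}uniform remainder from \cite{[Bhattacharya=Rao=1976]} (as in the ``advanced technique''). This replaces the joint density of $\big(\sum_{i=1}^{n}\X{i},\sum_{i=1}^{n}\T{i}\big)$ by the Gaussian density times $(1+\text{polynomial corrections of order }n^{-1/2},n^{-1},\dots)$, with an error controllable after summation in $n$ and integration in $z$. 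In Step~E the leading Gaussian term, after the change of variables $z\mapsto\HNode{T}{n}{t-z}{}$ and a Riemann{\zzz}sum evaluation (the nodal points $\HNode{T}{n}{t}{}$ being spaced by $\asymp (\parSigma{T}{}\sqrt{n})^{-1}$), collapses to $\UGauss{0}{1}(x)$; the $r${\zzz}th correction term collapses to $\Polynom{Q}{r}(x)\,\Ugauss{0}{1}(x)/t^{r/2}$, and tracking the first correction through this calculation produces exactly the stated $\Polynom{Q}{1}$, with $\Kompl{3}{0}$ the appropriately rescaled third cumulant of $\parMu{X}{}\T{}-\parMu{T}{}\X{}$ (expanded via the mixed moments $\hkov{XT}{i,j}$) and $\Value{I}{1}$ the term coming from the interaction of the fluctuation of $\sum\T{i}$ with the summation index $n$ (the ``$\supLS{t}$ versus $t/\parMu{T}{}$'' discrepancy). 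Step~F bounds the accumulated remainder by $\simb{O}(t^{-(k-2)/2})$ using the non{\zzz}uniform factors $(1+|\HNode{T}{n}{t-z}{}|)^{-(k+1)}$, exactly as in the scalar case.

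The main obstacle is Step~E: the bookkeeping in the Riemann{\zzz}sum reduction is considerably heavier here than in Theorem~\ref{drgteghrweher}, because one must simultaneously (i) translate the Gaussian and its Hermite{\zzz}polynomial corrections in the two{\zzz}dimensional variable through the conditioning $\sum\T{i}=t-z$, (ii) account for the fact that fixing $\sum\T{i}=t-z$ shifts and rescales the conditional law of $\sum\X{i}$, and (iii) expand everything consistently to order $t^{-(k-2)/2}$, correctly matching the Euler{\zzz}Maclaurin correction from the sum over $n$ with the $n^{-1/2}$ term of the Edgeworth expansion. It is precisely this matching that yields the $\tfrac12\,I_1$ contribution and the cross{\zzz}moment structure of $\Kompl{3}{0}$; getting the constants right is the only genuinely delicate point, and it is the ``tedious but fairly standard classical analysis'' that the excerpt defers. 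Verifying nondegeneracy and the exact form of $\StDevS{2}{}$ as $\D(\parMu{X}{}\T{}-\parMu{T}{}\X{})\,\parMu{T}{-3}$ is routine, and the truncation Steps~B, C, F are direct transcriptions of the scalar argument.
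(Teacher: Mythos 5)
Your proposal follows exactly the paper's own route: the total probability identity of Step~A is the paper's \eqref{srdtuytfgik}, and your ``joint density on the slab $\{\sum\T{i}=t-z\}$ integrated over $\sum\X{i}$'' is precisely the two{\zzz}dimensional hybrid integro{\zzz}local refined \CLT (the paper cites \cite{[Dubinskaite=1982]}--\cite{[Dubinskaite=1984=b]} for it, rather than \cite{[Bhattacharya=Rao=1976]}, but that is a difference of reference, not of method), with the truncation, Riemann{\zzz}sum, and non{\zzz}uniform remainder steps matching the paper's Steps~B, C, E, F. The level of detail deferred (the ``tedious but fairly standard classical analysis'' of Step~E that produces $\Kompl{3}{0}$ and $\Value{I}{1}$) is the same as what the paper itself leaves to the reader, so the proposal is correct and essentially identical in approach.
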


Both Theorems~\ref{drgteghrweher} and \ref{drsthtjmkfh} are proved (see details
in \cite{[Malinovskii=2021=b]}, Chapter~4) by means of practically the same
advanced technique which was used in the proof of \eqref{xfghfdgds} and
\eqref{we567u65i}, respectively. It consists of steps~A--F, which we will draw.

\Step{A}{use of fundamental identity}
Equalities \eqref{e56u7i67rt} and \eqref{sadfgbnmcv} are replaced by
\begin{equation}\label{srdtuytfgik}
\begin{aligned}
\P\big\{\Sum{S}{\supLS{t}}\leqslant
x\big\}=&\,\sum_{n=1}^{\infty}\int_{0}^{t}\P\bigg\{\sum_{i=1}^{n}\X{i}\leqslant
x\,\bigg|\,\sum_{i=1}^{n}\T{i}=t-z\,\bigg\}
\\[2pt]
&\times\pdF{T}^{*n}(t-z)\,\P\big\{\T{n+1}>z\big\}\,dz,
\end{aligned}
\end{equation}
which is the total probability formula.

\Steps{B and C}
remain essentially the same.

\Step{D}{application of refined \cCLT}
The integrand in \eqref{srdtuytfgik} is ready for application of (see
\cite{[Dubinskaite=1982]}--\cite{[Dubinskaite=1984=b]}) two{\zzz}dimensional
hybrid integro{\zzz}local (for density) \cCLT. In more detail, in the proof of
Theorem~\ref{drgteghrweher}, we use Berry{\zzz}Esseen estimate, in the proof of
Theorem~\ref{drsthtjmkfh}, we use Edgeworth's expansion in this \cCLT, both
with non{\zzz}uniform remainder terms.

\Steps{E and F}
remain nearly the same. Technically, they are more complicated than steps~E
and~F in the proof of \eqref{xfghfdgds} and \eqref{we567u65i}, but use
essentially the same analytical techniques.

\subsection{Garbage and deficiency of basic technique}\label{dtyiugoh}

When studying altered \mRI{\zzzx}sum $\Sum{S}{\supLS{t}}$, the main point of
basic technique consists in approximating $\Sum{S}{\supLS{t}}$ by the ordinary
sum $\Sum{S}{\,[\E\!\supLS{t}]}$, whose asymptotical normality is evident: the
real number $[\E\!\supLS{t}]$ is (see \eqref{xfghfdgds}) of the order
$t/\E\,\T{}\to\infty$, as $t\to\infty$, and the summands are \iid In other
words, although technically the basic technique applies Kolmogorov's inequality
for maximum of partial sums, its fundamental idea is to throw the difference
$\Garb{t}=\Sum{S}{\supLS{t}}-\Sum{S}{\,[\E\!\supLS{t}]}$, called garbage, in
the trash.

Constrained by this idea, the basic technique does not allow further
refinements, such as in Theorems~\ref{drgteghrweher} and \ref{drsthtjmkfh},
since $\Garb{t}$ is a random variable of order $t^{1/4}$: under natural
regularity conditions, including $\E(\T{}^{3})<\infty$, $\E(\X{}^{3})<\infty$,
\begin{equation}\label{asrghher}
\begin{aligned}
&\sup_{x\in\Rline}\bigg|\,
\P\bigg\{\frac{\parMu{T}{3/4}}{\parSigma{X}{}\parSigma{T}{1/2}t^{1/4}}
\,\Garb{t}\leqslant x\bigg\}
\\[0pt]
&\hskip 40pt
-\int_{-\infty}^{\infty}\UGauss{0}{1}\bigg(\frac{x}{\sqrt{|z|}}\bigg)
\Ugauss{0}{1}(z)\,dz\bigg|=\simb{O}\big(t^{-1/4}\big),\quad t\to\infty.
\end{aligned}
\end{equation}

The proof of \eqref{asrghher} (see details in \cite{[Malinovskii=1987=b]} and
\cite{[Malinovskii=1988]}) by means of advanced technique follows the scheme
sketched in the proof of \eqref{xfghfdgds} and \eqref{we567u65i} and
Theorems~\ref{drgteghrweher} and \ref{drsthtjmkfh}. It consists of steps~A--F,
applies refined \CLT with non{\zzz}uniform remainder terms, and requires mere
standard classical analysis at steps~B, C, E, and F.

\begin{figure}[t]
\center{\includegraphics[scale=.8]{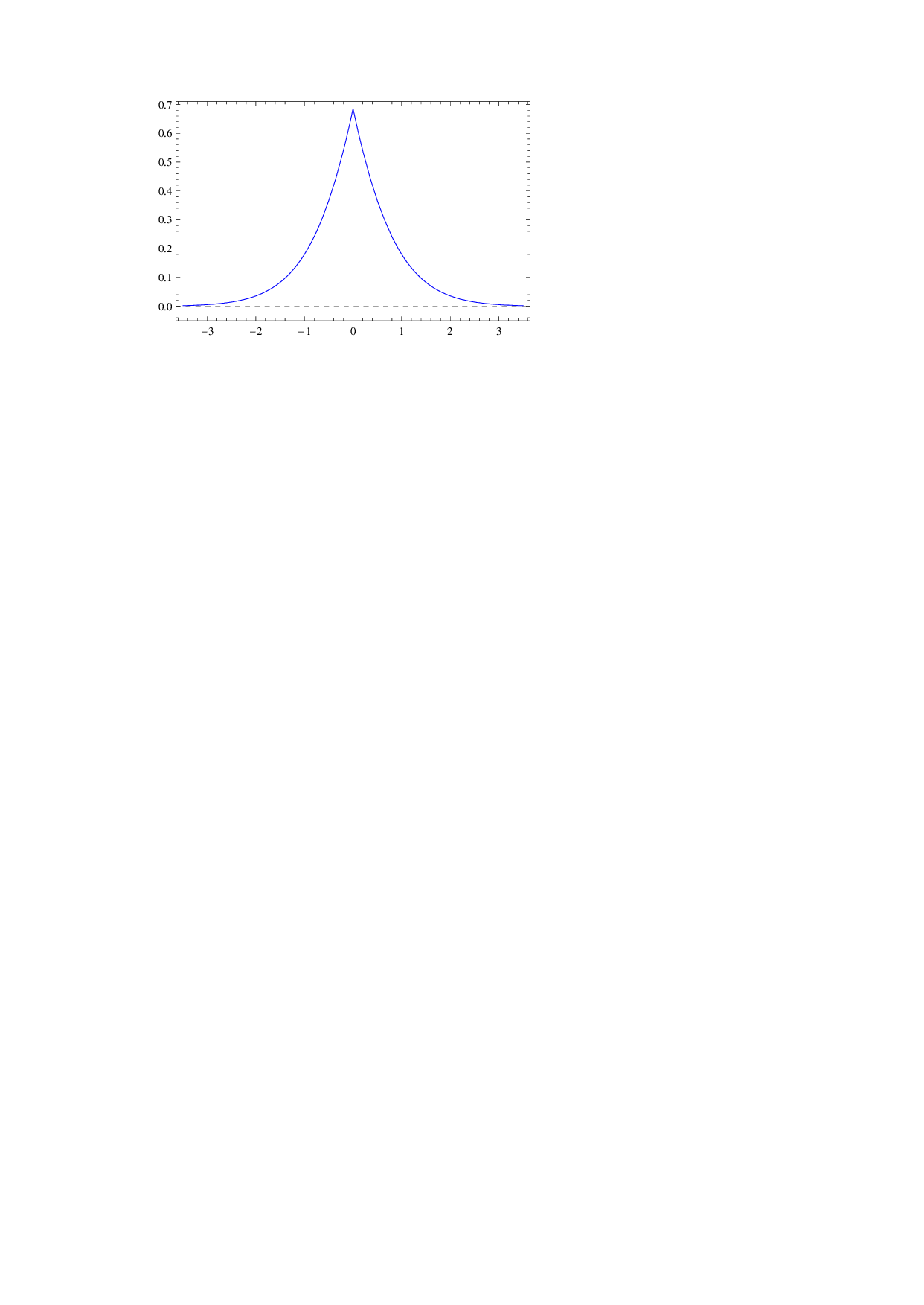}}
\caption{Graph of $f(y)={\displaystyle\int_{-\infty}^{\infty}}
\Ugauss{0}{1}\Big(\frac{y}{\sqrt{|z|}}\Big)\frac{1}{\sqrt{|z|}}\,\Ugauss{0}{1}(z)\,dz$.}\label{rthrurehd}
\end{figure}

For greater clarity, note that \pdf of the limit distribution in
\eqref{asrghher} is
\begin{equation*}
f(y)=\int_{-\infty}^{\infty}\Ugauss{0}{1}\bigg(\frac{y}{\sqrt{|z|}}\bigg)
\frac{1}{\sqrt{|z|}}\;\Ugauss{0}{1}(z)\,dz,
\end{equation*}
its maximal value is
\begin{equation*}
f(0)=\frac{1}{\sqrt{2\pi}}\int_{-\infty}^{\infty}\frac{1}{\sqrt{|z|}}\;\Ugauss{0}{1}(z)\,dz
=\frac{\Gamma(1/4)}{2^{3/4}\pi}\approx 0.686,
\end{equation*}
and its graph is drawn in Fig.~\ref{rthrurehd}.

\section{Limit theorems for \bmII{\zzzx}sums and collective risk theory}\label{dstgfgjfgjrf}

The reader familiar with collective risk theory readily recognizes in
$\P\{\Sum{S}{\infLS{t}}\leqslant x\}$ of Example~\ref{zesrfgrfh} the
probability of ruin within time $x$, where\footnote{Under the assumption of
independence from each other.} $\X{i}$, $i=1,2,\dots$, are intervals between
claims, $\Y{i}$, $i=1,2,\dots$, are claim amounts, $t$ is the initial capital,
and $c$ is the premium intensity. For $0<c\leqslant\cS$, the \mII{\zzzx}sum
$\Sum{S}{\infLS{t}}$ is proper, while (in terms of the risk reserve model)
insurance is ruinous. For $c>\cS=\E{Y}/\E{T}$, the \mII{\zzzx}sum
$\Sum{S}{\infLS{t}}$ is defective, while insurance is profitable.

\subsection{Modular approach: a way to manage complexity}

A system is modular if its components can be separated and recombined, with the
advantage of flexibility and variety in use. Breaking such a system into
independent or nearly independent modules is done in order (see
\cite{[Baldwin=Clark=2000]}) ``to hide the complexity of each part behind an
abstraction and interface''. Modularity is implemented in two ways: by
splitting an orderly queue (for ``queuing{\zzz}type'' models) by events that
occur one after another in time, or by sorting spatial items (for
``clustering{\zzz}type'' models) into loosely related and similar sets or
groups.

An example of the latter is the development of the OS/360 operating system for
the original IBM 360 line of computers. At first (see \cite{[Brooks=1975]}), it
was organized in a relatively indecomposable way. It was deemed that each
programmer should be familiar with all the material, i.e., should have a copy
of the workbook in his office. But over time, this led to a rapid growth of
troubles. For example, even maintaining the workbook, whose volume constantly
and significantly increased, began to take up significant time from each
working day. Therefore, it became necessary to figure out whether programmers
developing one particular module really need to know about the structure of the
entire system.

It was concluded that, especially in large projects, the answer is negative and
managers (quoting from \cite{[Parnas=1972]}) ``should pay attention to
minimizing interdependencies. If knowledge is hidden or encapsulated within a
module, that knowledge cannot affect, and therefore need not be communicated
to, other parts of a system.'' This idea has been framed as ``information
hiding'', a key concept in the modern object{\zzz}oriented approach to computer
programming.

While the modular approach is not needed for \mRD{\zzzx}sums, it is useful when
studying \mII, \mRD, and proper \mID{\zzzx}sums. In ``queuing{\zzz}type''
models, it is applied by splitting an orderly queue by events (see, e.g.,
\cite{[Pacheco=Prabhu=Tang=2009]}: ``regenerative phenomena'', or the like)
that occur one after another in time. In ``clustering{\zzz}type'' models, it is
applied by sorting spatial items into loosely related (and similar) sets or
groups.

\subsection{Normal approximation for proper \bmII{\zzzx}sums}\label{asertewtyrh}

In this case, modular structure is defined by the random indices
$0\equiv\Ladd{0}<\Ladd{1}<\Ladd{2}\dots$, where
\begin{equation*}
\Ladd{k}=\inf\big\{i>\Ladd{k-1}:\Sum{V}{i}>\Sum{V}{\Ladd{k-1}}\big\}\eqD\Ladd{},\quad
k=1,2,\dots.
\end{equation*}
The key point is the following ``exact''\footnote{This means that, unlike,
e.g., D{\oe}blin's dissection, it does not contain incomplete initial and final
modules.} partition of the proper \mII{\zzzx}sum $\Sum{S}{\infLS{t}}$, called
Blackwell's dissection\footnote{This dissection was introduced by Blackwell in
\cite{[Blackwell=1953]}.}:
\begin{equation}\label{ew5t6rtjuty}
\Sum{S}{\infLS{t}}=\sum_{i=1}^{\BinfLS{t}}\Block{X}{i}{},
\end{equation}
where $\BinfLS{t}=\inf\big\{n>0:\sum_{i=1}^{n}\Block{T}{i}{}>t\big\}$, or
$\infty$, if the set is empty,
$\Block{X}{i}{}=\sum_{j=\Ladd{i-1}+1}^{\Ladd{i}}\X{j}$, and
$\Block{T}{i}{}=\sum_{j=\Ladd{i-1}+1}^{\Ladd{i}}\T{j}>0$ by definition. Bearing
in mind that $t$ can be exceeded only at a ladder index, equality
\eqref{ew5t6rtjuty} is obvious.

Since $(\Block{T}{i}{},\Block{X}{i}{})\eqD(\Block{T}{}{},\Block{X}{}{})$,
$i=1,2,\dots$, are \iid and $\Block{T}{}{}$ is positive by definition, the sum
on the right{\zzz}hand side of \eqref{ew5t6rtjuty} is a modular \mRI{\zzzx}sum
which is examined as above. Using \eqref{ew5t6rtjuty}, nearly all results on
asymptotic normality and its refinements available for \mRI{\zzzx}sums can be
transferred to proper \mII{\zzzx}sums.

In particular (see details in \cite{[Malinovskii=2021=b]}, Chapter~9), in
Example~\ref{zesrfgrfh} with $0<c<\cS$ the normal approximation under natural
regularity conditions is
\begin{equation}\label{asertgrfyhd}
\lim_{t\to\infty}\sup_{x\in\Rline}\,\big|\,\P\big\{\Sum{S}{\infLS{t}}\leqslant
x\big\}-\UGauss{\mominus t}{\Dominus^{2} t}(x)\,\big|=0,
\end{equation}
where $\mominus=\parMu{X}{}\parMu{T}{-1}$ and
$\Dominus^{2}=\E(\parMu{\T{}}{}\X{}-\parMu{\X{}}{}\T{}\,)^{2}\parMu{\T{}}{-3}$.
Its refinements, i.e., Berry{\zzz}Esseen's estimate and Edgeworth's expansion,
are similar to those in Theorems~\ref{drgteghrweher} and \ref{drsthtjmkfh} and
are omitted in this presentation.

Regarding $\mominus$ and $\Dominus^{2}$ expressed above in the original (rather
than modular) terms, the following should be noted. The refined normal
approximation \eqref{asertgrfyhd} obtained by modular approach is first written
in terms of the modular random variables $\Ladd{}$, $\Block{T}{}{}$, and
$\Block{X}{}{}$. Converting them to the original terms requires additional
effort. There are several ways to do this. First, Wald's identities can bring a
great release since certain combinations of moments of block random variables
$\Block{T}{}{}$ and $\Block{X}{}{}$ can be represented through the moments of
the original random variables $\T{}$ and $\X{}$. Second, moments of ladder
index $\Ladd{}$ and ladder modules $\Block{T}{}{}$, $\Block{X}{}{}$ can be
calculated analytically, using (see, e.g., \cite{[Feller=1971]}) Spitzer's
sums.

\subsection{Quasi{\zzz}normal approximation for defective
\bmII{\zzzx}sums}\label{srdtdttfdhere}

For $c>\cS$, under natural regularity conditions (see details in
\cite{[Malinovskii=2021=b]}, Chapter~9), quasi{\zzz}normal
approximation\footnote{This name emphasizes the presence of a normal
distribution function. In risk theory, it is known as
Cram{\'e}r{\zzz}Lundberg's approximation.} is
\begin{equation}\label{dfvdsfbdfbXX}
\lim_{t\to\infty}\sup_{x>0}\,\big|\,e^{\LunAdjust
t}\,\P\big\{\Sum{S}{\infLS{t}}\leqslant x\big\}-\LunKonst\;\UGauss{\moplus
t}{\Doplus^{2}t}(x)\,\big|=0.
\end{equation}
It follows from \eqref{dfvdsfbdfbXX} that
$\P\{\Sum{S}{\infLS{t}}<\infty\}\approx\,\LunKonst\,e^{-\LunAdjust t}$, which
is an asymptotic formula for defect (cf. \eqref{34563745u856}) of defective
\mII{\zzzx}sum $\Sum{S}{\infLS{t}}$.

Here $\LunAdjust$ is a positive solution (\wrt $x$) to the nonlinear
equation\footnote{In risk theory, equation \eqref{sdfgrhjrgdeXX} is called
Lundberg's equation. Its positive solution $\LunAdjust$ is called Lundberg's
exponent, or adjustment coefficient.}
\begin{equation}\label{sdfgrhjrgdeXX}
\E{\,e^{xT}\,}=1
\end{equation}
and in terms of the associated random variables $\AssT{i}\eqD\AssT{}$ and
$\AssX{i}\eqD\AssX{}$, $i=1,2,\dots$,
\begin{equation*}
\begin{aligned}
&\moplus=\parMu{\AssX{}}{}\parMu{\AssT{}}{-1},
\quad\Doplus^{2}=\E\big(\parMu{\AssT{}}{}\,\AssX{}-\parMu{\AssX{}}{}\,\AssT{}\big)^{2}\parMu{\AssT{}}{-3},
\\[4pt]
&\LunKonst=\frac{1}{\LunAdjust\,\parMu{\AssT{}}{}}\exp\bigg\{-\sum_{n=1}^{\infty}\frac{1}{n}\,
\P\big\{\Sum{V}{n}>0\big\}
-\sum_{n=1}^{\infty}\frac{1}{n}\,\P\big\{\AssSum{V}{n}\leqslant 0\big\}\bigg\},
\end{aligned}
\end{equation*}
where $\AssSum{V}{n}=\sum_{i=1}^{n}\AssT{i}$. Recall (see, e.g., Example~(b) in
\cite{[Feller=1971]}, Chapter~XII, Section~4) that the associated random
variables we define as follows. Starting with \mII{\zzzx}basis
$(\T{i},\X{i})\eqD(\T{},\X{})$, $i=1,2,\dots$, we switch from
$\cdF{TX}(t,x)=\P\big\{T\leqslant t,X\leqslant x\big\}$ to
$\cdF{\AssT{}\AssX{}}(t,x)=\P\big\{\AssT{}\leqslant t,\AssX{}\leqslant x\big\}$
defined by the integral $\int_{-cx}^{t}\int_{0}^{x}e^{\LunAdjust
u}\,\cdF{TX}(du,dv)$. The latter probability distribution is proper and
$\E\,\AssT{}>0$. Commonly used shorthand for it is
$\cdF{\AssT{}\AssX{}}(dt,dx)=e^{\LunAdjust z}\cdF{TX}(dt,dx)$.

\begin{remark}
The proof of \eqref{dfvdsfbdfbXX} carried out using the modular approach is
based (see \cite{[Bahr=1974]} and \cite{[Malinovskii=1994=c]}) on the following
counterpart of equality \eqref{ew5t6rtjuty}:
\begin{equation}\label{sdfvbfdhnfZZ}
\P\big\{\Sum{S}{\infLS{t}}\leqslant x\}
=\E\bigg(\exp\bigg\{-\LunAdjust\;\sum_{i=1}^{\infLSAss{t}}\,\AssBlock{T}{i}{}\bigg\}\,
\1_{(-\infty,x]}\bigg(\sum_{i=1}^{\infLSAss{t}}\,\AssBlock{X}{i}{}\bigg)\bigg),
\end{equation}
where $\infLSAss{t}=\inf\big\{n\geqslant
1:\sum_{i=1}^{n}{\AssBlock{T}{i}{}}>t\big\}$, or $\infty$, if the set is empty,
and the simple modular basis $(\AssBlock{T}{k}{},\AssBlock{X}{k}{})$,
$k=1,2,\dots$, is given by
\begin{equation*}
\AssBlock{T}{k}{}=\Sum{\Ass{V}{}}{\LaddAss{k}}-\Sum{\Ass{V}{}}{\LaddAss{k-1}}>0\quad\text{and}\quad
\AssBlock{X}{k}{}=\Sum{\Ass{S}{}}{\LaddAss{k}}-\Sum{\Ass{S}{}}{\LaddAss{k-1}},\quad
k=1,2,\dots,
\end{equation*}
where $\AssSum{S}{n}=\sum_{i=1}^{n}\AssX{i}$ and the ladder indices generated
by the associated random variables are $\LaddAss{0}\equiv 0$ and
$\LaddAss{k}=\inf\big\{i>\LaddAss{k-1}:\Sum{\Ass{V}{}}{i}>\Sum{\Ass{V}{}}{\LaddAss{k-1}}\big\}$,
$k=1,2,\dots$.\qed\end{remark}

\begin{remark}
Edgeworth's expansions are known for both the normal approximation
\eqref{asertgrfyhd} and (see \cite{[Malinovskii=1994=c]}) the quasi{\zzz}normal
approximation \eqref{dfvdsfbdfbXX}. Further details see, e.g., in
\cite{[Malinovskii=2021=b]}, Chapter~9.\qed\end{remark}

\begin{example}[\textsl{Example~\ref{zesrfgrfh} continued}]\label{srtdsyhtfjgt}
Let the random variables $\X{i}\eqD\X{}$ and $\Y{i}\eqD\Y{}$, $i=1,2,\dots$, be
exponentially distributed with parameters $\paramX>0$ and $\paramY>0$. Clearly,
$\E{\,e^{-x cX}}=\paramX/(\paramX+cx)$ and $\E{\,e^{xY}}=\paramY/(\paramY-x)$.
It follows that $\E{\,e^{xT}\,}=\E{\,e^{xY}}\E{\,e^{-x cX}}$, whence
$\E{\,e^{xT}\,}=\paramY\paramX/((\paramY-x)(\paramX+cx))$, and equation
\eqref{sdfgrhjrgdeXX} is the quadratic equation
$\paramY\paramX/((\paramY-x)(\paramX+cx))=1$ with respect to $x$. For
$c>\cS=\paramX/\paramY$, its positive solution is
$\LunAdjust=\paramY\,(1-\paramX/(c\paramY))$. Further calculations yield
$\LunKonst=\paramX/(c\paramY)$ and
\begin{equation*}
\begin{gathered}
\mominus=-\frac{1}{c\,(1-\paramX/(c\paramY))},\quad
\Dominus^{2}=-\frac{2\,(\paramX/(c\paramY))}{c^{2}\paramY\,(1-\paramX/(c\paramY))^{3}},
\\[0pt]
\moplus=\frac{\paramX/(c\paramY)}{c\,(1-\paramX/(c\paramY))},\quad
\Doplus^{2}=\frac{2\,(\paramX/(c\paramY))}{c^{2}\paramY\,(1-\paramX/(c\paramY))^{3}}.
\end{gathered}
\end{equation*}

\begin{figure}[t]
\center{\includegraphics[scale=.8]{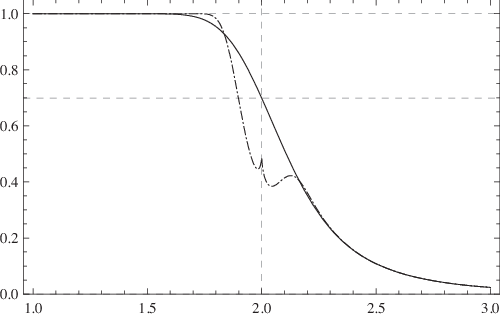}}
\caption{Graph ($X$-axis is $c$) of $\P\big\{\Sum{S}{\infLS{t}}\leqslant
x\big\}$ (solid line), which is a monotone decreasing function calculated
numerically, using equalities \eqref{sadfgbndfn}--\eqref{sargrsdyhe}, and the
approximation \eqref{asertgrfyhd}, \eqref{dfvdsfbdfbXX} (dash{\zzz}dotted
line), when $T$ and $Y$ are exponentially distributed with parameters
$\paramT=2$ and $\paramY=1$, respectively, and $t=200$, $u=10$. Horizontal
line: $\P\big\{\Sum{S}{\infLS{t}}\leqslant x\big\}$ with $c$ equal to $\cS$,
which is $0.699$.}\label{asrdtyjtyt6}
\end{figure}
\begin{figure}[t]
\center{\includegraphics[scale=.8]{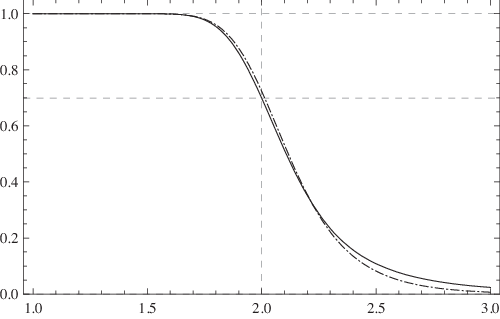}}
\caption{Graphs ($X$-axis is $c$) of $\P\big\{\Sum{S}{\infLS{t}}\leqslant
x\big\}$ (solid line) calculated numerically, using equalities
\eqref{sadfgbndfn}--\eqref{sargrsdyhe}, and approximation \eqref{asergfhrXX}
(dash{\zzz}dotted line), when $T$ and $Y$ are exponentially distributed with
parameters $\paramT=2$ and $\paramY=1$, respectively, and $t=200$, $u=10$.
Horizontal line: $\P\big\{\Sum{S}{\infLS{t}}\leqslant x\big\}$ with $c$ equal
to $\cS$, which is $0.699$.}\label{sdtuyikghlhj}
\end{figure}

Approximations \eqref{asertgrfyhd} and \eqref{dfvdsfbdfbXX} are compared (see
Fig.~\ref{asrdtyjtyt6}) with the exact values of
$\P\big\{\Sum{S}{\infLS{t}}\leqslant x\big\}$ calculated numerically. They are
unsatisfactory in the vicinity of the point $\cS$ and satisfactory elsewhere.
\qed\end{example}

\begin{remark}
The assumption that a positive solution $\LunAdjust$ to the equation
\eqref{sdfgrhjrgdeXX} exists is a serious constraint on $\T{}=\Y{}-c\X{}$. It
follows that $\E{\,e^{xT}\,}$ must exist in the right neighborhood of $0$ and
that $\P\big\{T>x\big\}$ is exponentially bounded from above for $x>0$. The
latter follows from Markov's inequality, as follows:
$\P\big\{T>x\big\}\leqslant e^{-\LunAdjust x}\,\E{\,e^{\LunAdjust
T}}=e^{-\LunAdjust x}$.\qed\end{remark}

\subsection{Inverse Gaussian approximation for \bmII{\zzzx}sums}\label{aserterutjt}

The asymptotic (as $t\to\infty$) behavior of \mII{\zzzx}sum does not
necessarily have to be related to a normal distribution. The following inverse
Gaussian approximation (see details in \cite{[Malinovskii=2021=b]},
\cite{[Malinovskii=2021=c]}) for $\P\{\Sum{S}{\infLS{t}}\leqslant x\}$ has
great advantages. In the framework of Example~\ref{zesrfgrfh}, introduce
$M=\E{\X{}}/\E{\Y{}}$ and $D^{\,2}=
((\E{\X{}})^{2}\D{\Y{}}+(\E{\Y{}})^{2}\D{\X{}})/(\E{\Y{}})^{3}$. Under certain
regularity conditions, we have
\begin{equation}\label{asergfhrXX}
\lim_{t\to\infty}\sup_{x>0}\big|\,\P\big\{\Sum{S}{\infLS{t}}\leqslant
x\big\}-\AInt{M}{t,c}(x)\,\big|=0,
\end{equation}
where
\begin{equation*}
\begin{aligned}
\AInt{M}{t,c}(x)&=\int_{0}^{\frac{cx}{t}}\frac{1}{z+1}\,
\Ugauss{cM(z+1)}{\frac{c^{2}D^{2}}{u}(z+1)}(z)\,dz,
\end{aligned}
\end{equation*}
or, in equivalent form,
\begin{equation*}
\begin{aligned}
\AInt{M}{t,c}(x)&=\begin{cases} \big(F(z+1;\muIG,\lambdaIG)
\\[2pt]
\hskip 30pt -F(1;\muIG,\lambdaIG)
\big)\Big|_{z=\frac{cx}{t},\muIG=\frac{1}{1-cM},\lambdaIG=\frac{t}{c^{2}D^{2}}},&
0<c\leqslant\cS,
\\[2pt]
\exp\bigg\{-\dfrac{2\lambdaIG}{\HmuIG}\bigg\}\,\big(F(z+1;\HmuIG,\lambdaIG)
\\[6pt]
\hskip 30pt
-F(1;\HmuIG,\lambdaIG)\big)\Big|_{z=\frac{cx}{t},\HmuIG=\frac{1}{cM-1},
\lambdaIG=\frac{t}{c^{2}D^{2}}},&c>\cS
\end{cases}
\end{aligned}
\end{equation*}
and
\begin{equation}\label{sadfgbfnfXX}
\begin{aligned}
F(z;\muIG,\lambdaIG)=&\,
\UGauss{0}{1}\bigg(\sqrt{\frac{\lambdaIG}{z}}\,\bigg(\frac{z}{\muIG}-1\bigg)\bigg)
\\[0pt]
&+\exp\bigg\{\frac{2\lambdaIG}{\muIG}\bigg\}
\,\UGauss{0}{1}\bigg(-\sqrt{\frac{\lambdaIG}{z}}\,\bigg(\frac{z}{\muIG}+1\bigg)\bigg),\quad
z>0.
\end{aligned}
\end{equation}

The approximation \eqref{asergfhrXX} is called inverse Gaussian because
\eqref{sadfgbfnfXX} is \cdf of the inverse Gaussian distribution with
parameters $\muIG>0$ and $\lambdaIG>0$.

\begin{remark}
In \cite{[Malinovskii=2021=b]}, Chapter~7, refinements (i.e., analogues of the
Berry{\zzz}Esseen estimate and the Edgeworth expansion) of the inverse Gaussian
approximation \eqref{asergfhrXX} are obtained.\qed\end{remark}

Comparing approximations \eqref{asertgrfyhd} and \eqref{dfvdsfbdfbXX} with
\eqref{asergfhrXX}, one can argue\footnote{Using heuristic arguments and
referring to exact formulas \eqref{sadfgbndfn}--\eqref{sargrsdyhe}.} that the
results for proper and defective \mII{\zzzx}sums in Example~\ref{zesrfgrfh}
must border seamlessly. But the normal (with $0<c<\cS=\E{Y}/\E{T}$) and
quasi{\zzz}normal (with $c>\cS$) approximations \eqref{asertgrfyhd} and
\eqref{dfvdsfbdfbXX} do not border seamlessly. Moreover (see
Fig.~\ref{asrdtyjtyt6}), both of them are either poor, or
invalid\footnote{Quasi{\zzz}normal approximation \eqref{dfvdsfbdfbXX} formally
requires $c>\cS$. The existence of $\LunAdjust>0$ is possible only if the tail
of $\Y{}$ is exponentially decreasing.} for $c$ at or near the critical value
$\cS$.

This distressing fact only indicates that the normal and quasi{\zzz}normal
approximations are deficient at $\cS$. In contrast, the inverse Gaussian
approximation (see Fig.~\ref{sdtuyikghlhj}) does not have such a drawback. The
case when $c$ is close to $\cS$ or even equals $\cS$ is routinely covered by
the inverse Gaussian approximation.

\begin{remark}
The intuitive explanation of deficiency of normal and quasi{\zzz}normal
approximations in the vicinity of the critical value $\cS$ is given in
\cite{[Malinovskii=2021=a]}, Section~9.2.4. It is as follows. For exponentially
distributed $\X{i}$ and $\Y{i}$, $i=1,2,\dots$, and $c$ equal to $\cS$ the
compound sum is made up of the random variables with power moments of order no
greater than $1/2$. Therefore, there is no reason to expect validity of normal
and quasi{\zzz}normal approximations.\qed\end{remark}

The flaw of \eqref{asertgrfyhd} and \eqref{dfvdsfbdfbXX} in the vicinity od
$\cS$ is merely a drawback of a particular mathematical technique. However, it
has had a major negative impact on insurance modeling: while the insurance
process in a close neighborhood of the critical point $\cS$ is of great
importance (see details in \cite{[Malinovskii=2021=a]}), it was overlooked for
a long time partly due to this flaw. Moreover, the inverse Gaussian
approximation allows (see \cite{[Malinovskii=2021=c]}) to develop a new
approach to risk measures different from value at risk (VaR).

\section{Limit theorems for \bmRD\ and proper \bmID{\zzzx}sums}\label{saergsdyherd}

Kesten mentioned the part of modular analysis that deals with the lack of
independence. It is best known because the area of ``Markov chains, continuous
time processes, and other situations in which the independence between summands
no longer applies'' is of particular interest. Among the tricks which reduce
limit theorems for dependent summands to the case of independent summands,
Kesten mentioned ``D{\oe}blin's trick''.

Studying limit theorems for \mRD\ and proper \mID{\zzzx}sums, we will combine
``D{\oe}blin's trick'' and ``Blackwell's trick'' and extend it to compound sums
with general modular structure.

\subsection{Compound sums with general modular structure}

For $t>0$ and a complex basis $(\T{i},\X{i})$, $i=1,2,\dots$, we focus on
unaltered complex compound sum $\Sum{S}{\infLS{t}}$. We denote by
$0<\LRegt{0}<\LRegt{1}<\dots$ a sequence of random indices, put
\begin{equation*}
\begin{gathered}
\Block{\tau}{0}{}=\LRegt{0},\quad \Block{T}{0}{}=\sum_{i=1}^{\LRegt{0}}\;\T{i},
\quad\Block{X}{0}{}=\sum_{i=1}^{\LRegt{0}}\;\X{i},
\\[0pt]
\Block{\tau}{k}{}=\LRegt{k}-\LRegt{k-1},\quad
\Block{T}{k}{}=\sum_{i=\LRegt{k-1}+1}^{\LRegt{k}}\T{i},\quad
\Block{X}{k}{}=\sum_{i=\LRegt{k-1}+1}^{\LRegt{k}}\X{i},
\end{gathered}
\end{equation*}
where $k=1,2,\dots$, and introduce the following definition.

\begin{definition}\label{wryrtutyity}
We say that the basis $(\T{i},\X{i})$, $i=1,2,\dots$, allows a modular
structure if there exist \as finite random indices
$0<\LRegt{0}<\LRegt{1}<\dots$, such that
\begin{list}{}{\parsep=2pt\leftmargin=44pt\labelwidth=24pt\labelsep=8pt\topsep=4pt}
\item[\rmCase{i}] the random vectors
$(\Block{\tau}{k}{},\Block{T}{k}{},\Block{X}{k}{})$ are independent for
$k=0,1,\dots$, and identically distributed for $k=1,2,\dots$,
\item[\rmCase{ii}] $\Sum{V}{\LRegt{k}}>\max_{0\leqslant
i\leqslant\LRegt{k}-1}\Sum{V}{i}$, $k=0,1,\dots$, i.e., $\LRegt{i}$,
$i=0,1,\dots$, are upper record moments of the sequence
$\Sum{V}{n}=\sum_{i=1}^{n}\T{i}$, $n=1,2,\dots$.
\end{list}
\end{definition}

For modular \iid random vectors
$(\Block{X}{k}{},\Block{T}{k}{},\Block{\tau}{k}{})
\eqD(\Block{X}{}{},\Block{T}{}{},\Block{\tau}{}{})$, $k=1,2,\dots$, we use the
shorthand notation $\parMu{\Block{\tau}{}{}}{}=\E\Block{\tau}{}{}$,
$\parMu{\Block{T}{}{}}{}=\E\Block{T}{}{}$,
$\parMu{\Block{X}{}{}}{}=\E\Block{X}{}{}$,
$\parSigma{\Block{\tau}{}{}}{2}=\D\Block{\tau}{}{}$,
$\parSigma{\Block{T}{}{}}{2}=\D\Block{T}{}{}$,
$\parSigma{\Block{X}{}{}}{2}=\D\Block{X}{}{}$. For
$\centrBlock{\tau}{}{}=\Block{\tau}{}{}-\parMu{\Block{\tau}{}{}}{}$,
$\centrBlock{T}{}{}=\Block{T}{}{}-\parMu{\Block{T}{}{}}{}$,
$\centrBlock{X}{}{}=\Block{X}{}{}-\parMu{\Block{X}{}{}}{}$, and write
$\kov{\Block{T}{}{}\Block{\tau}{}{}}{}=\E\centrBlock{T}{}{}\!\centrBlock{\tau}{}{}$,
$\kov{\Block{X}{}{}\Block{T}{}{}}{}=\E\centrBlock{X}{}{}\centrBlock{T}{}{}$,
$\kov{\Block{X}{}{}\Block{\tau}{}{}}{}=\E\centrBlock{X}{}{}\centrBlock{\tau}{}{}$,
$\kor{\Block{T}{}{}\Block{\tau}{}{}}{}=\kov{\Block{T}{}{}\Block{\tau}{}{}}{}/
(\parSigma{\Block{T}{}{}}{}\parSigma{\Block{\tau}{}{}}{})$
$\kor{\Block{X}{}{}\Block{T}{}{}}{}=\kov{\Block{X}{}{}\Block{T}{}{}}{}/
(\parSigma{\Block{X}{}{}}{}\parSigma{\Block{T}{}{}}{})$
$\kor{\Block{X}{}{}\Block{\tau}{}{}}{}=\kov{\Block{X}{}{}\Block{\tau}{}{}}{}/
(\parSigma{\Block{X}{}{}}{}\parSigma{\Block{\tau}{}{}}{})$. We further
introduce zero{\zzz}mean block random variables\footnote{Note that
$\E\big(\Block{\Renyi{}}{\!\Block{X}{}{}\!\Block{T}{}{}}{2}\big)$ is the same
as $\D\big(\Block{\Renyi{}}{\!\Block{X}{}{}\!\Block{T}{}{}}{}\big)$ and
$\E\big(\Block{\Renyi{}}{\!\Block{\tau}{}{}\!\Block{T}{}{}}{2}\big)$ is the
same as $\D\big(\Block{\Renyi{}}{\!\Block{\tau}{}{}\!\Block{T}{}{}}{}\big)$.}
\begin{equation*}
\begin{aligned}
\Block{\Renyi{}}{\!\Block{T}{}{}\Block{\tau}{}{}}{}
&=\parMu{\Block{T}{}{}}{}\,\Block{\tau}{}{}-\parMu{\Block{\tau}{}{}}{}\Block{T}{}{}
&&& \Block{\Renyi{}}{\!\Block{X}{}{}\!\Block{T}{}{}}{}
=\parMu{\Block{T}{}{}}{}\Block{X}{}{}-\parMu{\Block{X}{}{}}{}\Block{T}{}{}
\\[-9pt]
&&\quad\text{and}\quad&&&
\\[-9pt]
&=\parMu{\Block{T}{}{}}{}\centrBlock{\tau}{}{}-\parMu{\Block{\tau}{}{}}{}\centrBlock{T}{}{},
&&&
=\parMu{\Block{T}{}{}}{}\centrBlock{X}{}{}-\parMu{\Block{X}{}{}}{}\centrBlock{T}{}{}
\end{aligned}
\end{equation*}
and put
\begin{equation*}
\begin{gathered}
\bMeanN{}{\!}=\parMu{\Block{\tau}{}{}}{}\parMu{\Block{T}{}{}}{-1}, \quad
\bStDevN{2}{}=\E\big(\Block{\Renyi{}}{\!\Block{T}{}{}\Block{\tau}{}{}}{2}\big)\,
\parMu{\Block{T}{}{}}{-3},
\\[2pt]
\bMeanS{}{\!}=\parMu{\Block{X}{}{}}{}\parMu{\Block{T}{}{}}{-1},\quad
\bStDevS{2}{}=\E\big(\Block{\Renyi{}}{\!\Block{X}{}{}\!\Block{T}{}{}}{2}\big)\parMu{\Block{T}{}{}}{-3},
\\[0pt]
\bKovSN{}{\!}=\E\big(\Block{\Renyi{}}{\!\Block{T}{}{}\Block{\tau}{}{}}{}
\Block{\Renyi{}}{\!\Block{X}{}{}\!\Block{T}{}{}}{}\big)\,\parMu{\Block{T}{}{}}{-3},
\quad\bFKovSN{}{\!}=\bKovSN{}{}\big(\bStDevS{}{}\bStDevN{}{\!}\big)^{-1}.
\end{gathered}
\end{equation*}

Assumption (i) tackles ``dependence complexity'', while (ii) takes hold of
``irregularity complexity'', in both cases by reducing the original complex
compound sum to a simple modular compound sum with simple modular basis
$(\Block{T}{k}{},\Block{X}{k}{})$, $k=1,2,\dots$. It consists of \iid modular
random vectors with $s${\zzz}components positive. Moreover, if
$\Sum{V}{\LRegt{k}}<t$, then crossing level $t$ cannot occur inside $k$th block
or inside blocks with a smaller index.

With $\Polynom{R}{}(x,y)=\dfrac{x-\bFKovSN{}{\!}y}{1-\bFKovSN{2}{}}$, we will
use the following notation:
\begin{equation*}
\bKompl{3}{0}=\,\E\big(\Block{\Renyi{}}{\!\Block{X}{}{}\!\Block{T}{}{}}{3}\big)
\parMu{\Block{T}{}{}}{-4}\bStDevS{-3}{}-3\Polynom{R}{}(\Value{L}{1},\Value{L}{2}),
\end{equation*}
where
\begin{equation*}
\begin{aligned}
\Value{L}{1}=&\,\big(\kov{\Block{X}{}{}\Block{\tau}{}{}}{}(\parMu{\Block{\tau}{}{}}{}
\parSigma{\Block{T}{}{}}{2}
-\parMu{\Block{T}{}{}}{}\kov{\Block{T}{}{}\Block{\tau}{}{}}{})
-\parSigma{\Block{\tau}{}{}}{2}
(\parMu{\Block{X}{}{}}{}\parSigma{\Block{T}{}{}}{2}-\parMu{\Block{T}{}{}}{}
\kov{\Block{X}{}{}\Block{T}{}{}}{})
\\[2pt]
&+\kov{\Block{T}{}{}\Block{\tau}{}{}}{}(\parMu{\Block{X}{}{}}{}
\kov{\Block{T}{}{}\Block{\tau}{}{}}{}-\parMu{\Block{\tau}{}{}}{}
\kov{\Block{X}{}{}\Block{T}{}{}}{})\big)
\parMu{\Block{T}{}{}}{-3}\bStDevS{-1}{}\bStDevN{-2}{},
\\[4pt]
\Value{L}{2}=&\,\big(-\kov{\Block{X}{}{}\Block{\tau}{}{}}{}(\parMu{\Block{X}{}{}}{}
\parSigma{\Block{T}{}{}}{2}
-\parMu{\Block{T}{}{}}{}\kov{\Block{X}{}{}\Block{T}{}{}}{})+\parSigma{\Block{X}{}{}}{2}
(\parMu{\Block{\tau}{}{}}{}\parSigma{\Block{T}{}{}}{2}-\parMu{\Block{T}{}{}}{}
\kov{\Block{T}{}{}\Block{\tau}{}{}}{})
\\[2pt]
&-\kov{\Block{X}{}{}\Block{T}{}{}}{}(\parMu{\Block{\tau}{}{}}{}\kov{\Block{X}{}{}\Block{T}{}{}}{}
-\parMu{\Block{X}{}{}}{}\kov{\Block{T}{}{}\Block{\tau}{}{}}{})\big)
\parMu{\Block{T}{}{}}{-3}\bStDevS{-2}{}\bStDevN{-1}{}.
\end{aligned}
\end{equation*}
It can be shown by direct algebra that
$\Polynom{R}{}\big(\Value{L}{1},\Value{L}{2}\big)=-\big(\parMu{\Block{X}{}{}}{}
\parSigma{\Block{T}{}{}}{2}
-\parMu{\Block{T}{}{}}{}\kov{\Block{X}{}{}\Block{T}{}{}}{}\big)\,\bStDevS{-1}{}\parMu{\Block{T}{}{}}{-2}$.
Finally, we introduce
\begin{equation*}
\begin{aligned}
\xEta{1}&=\E\Block{X}{0}{}+\parMu{\Block{T}{}{}}{-1}
\sum_{s=1}^{\infty}\int_{-\infty}^{\infty}
x\int_{0}^{\infty}\P\big\{\Block{\tau}{1}{}\geqslant
s,\Block{N}{}{}(t)=s,\Block{X}{0}{\hskip 1pt(s)}\in dx\big\}\,dt,
\\[0pt]
\xEta{2}&=\E\Block{T}{0}{}+\parMu{\Block{T}{}{}}{-1}
\sum_{s=1}^{\infty}\int_{0}^{\infty}t\,\P\big\{\Block{\tau}{1}{}\geqslant
s,\Block{N}{}{}(t)=s\big\}\,dt,
\\[0pt]
\xEta{3}&=\E\Block{\tau}{0}{}+\parMu{\Block{T}{}{}}{-1}
\sum_{s=1}^{\infty}s\int_{0}^{\infty}\P\big\{\Block{\tau}{1}{}\geqslant
s,\Block{N}{}{}(t)=s\big\}\,dt,
\end{aligned}
\end{equation*}
where $\Block{X}{0}{\hskip 1pt(s)}=\sum_{k=\LRegt{0}+1}^{\LRegt{0}+s}\X{k}$ and
$\Block{N}{}{}(t)=\inf\big\{n\geqslant
1:\sum_{k=\LRegt{0}+1}^{\LRegt{0}+n}\T{k}>t\big\}$, or $\infty$, if the set is
empty.

Theorem~\ref{srfgdhftgdth} below is formulated under the following conditions.

\begin{condition}{\KondIbf{V}}\,(Non{\zzz}degenerate correlation matrix)\label{srdtgrehrt}
$\Det\McoR{Q}{}>0$ for the correlation matrix
\begin{equation*}
\McoR{Q}{}=\KRm{\Block{X}{1}{}}{\Block{T}{1}{}}{\Block{\tau}{1}{}}
=\left(\begin{matrix} 1 & \kor{\Block{X}{}{}\Block{T}{}{}}{} &
\kor{\Block{X}{}{}\Block{\tau}{}{}}{}
\\[0pt]
\kor{\Block{X}{}{}\Block{T}{}{}}{} & 1 & \kor{\Block{T}{}{}\Block{\tau}{}{}}{}
\\[0pt]
\kor{\Block{X}{}{}\Block{\tau}{}{}}{} & \kor{\Block{T}{}{}\Block{\tau}{}{}}{} &
1
\end{matrix}\right).
\end{equation*}
\end{condition}

\begin{condition}{\KondIIbf{P}{}}\,(Bounded density condition)\label{sdrtyrtute}
For an integer $n\geqslant 1$, there exists a bounded convolution power
$\pdF{\Block{T}{}{},\Block{\tau}{}{}}^{*n}(x,n)$ \wrt Lebesgue measure.
\end{condition}

\begin{condition}{\KondIbf{L}}\,(Lattice condition)\label{asrdtgrehrtr}
The integer{\zzz}valued random variable $\Block{\tau}{1}{}$ assumes values in
the set of natural numbers with maximal span $1$.
\end{condition}

\begin{condition}{\KondIbf{C}}\,(Uniform Cram{\'e}r's condition)\label{asrtyr6ui6}
\begin{equation*}
\overline{\lim}_{|t_{1}|\to\infty}\big|\,\E\exp\big\{it_{1}\Block{X}{}{}
+it_{2}\Block{T}{}{}+it_{3}\Block{\tau}{}{}\big\}\big|<1
\end{equation*}
for all $t_{2},t_{3}\in\Rline$.
\end{condition}

\begin{condition}{\KondIIIbf{B}{r}{s}}\,(Block moments
condition)\label{srdthrewr} For $r,s>0$
\begin{list}{}{\topsep=8pt\parsep=-2pt\leftmargin=70pt\labelwidth=24pt\labelsep=8pt}
\item[\rmCase{i}] $\E\hskip 1pt\big(\LRegt{0}^{\hskip 1pt r}\big)<\infty$,\quad
$\E\big(\Block{\tau}{1}{s}\big)<\infty$,
\item[\rmCase{ii}]
$\E\bigg(\bigg[\displaystyle\sum_{k=1}^{\LRegt{0}}\big|\,\T{k}\big|\,\bigg]^{r}\bigg)<\infty$,\quad
$\E\bigg(\bigg[\displaystyle\sum_{k=\LRegt{0}+1}^{\LRegt{1}}\big|\,\T{k}\big|\,\bigg]^{\hskip
-1pt s}\bigg)<\infty$,
\item[\rmCase{iii}]
$\E\bigg(\bigg[\displaystyle\sum_{k=1}^{\LRegt{0}}\big|\,\X{k}\big|\,\bigg]^{r}\bigg)<\infty$,\quad
$\E\bigg(\bigg[\displaystyle\sum_{k=\LRegt{0}+1}^{\LRegt{1}}\big|\,\X{k}\big|\,\bigg]^{\hskip
-1pt s}\,\bigg)<\infty$.
\end{list}\vskip -6pt
\end{condition}

The following theorem is Theorem~1\,(III) in \cite{[Malinovskii=1994=a]}.

\begin{theorem}\label{srfgdhftgdth}
Assume that the basis $(\T{i},\X{i})$, $i=1,2,\dots$, of the complex compound
sum $\Sum{S}{\infLS{t}}$ allows a modular structure. If conditions \KondIit{V},
\KondIIit{P}{}, \KondIit{L}, \KondIit{C}, and \KondIIIit{B}{k-\frac{3}{2}}{k}
with $k>3$ are satisfied, then there exist polynomials $\Polynom{Q}{r}(x)$,
$r=1,2,\dots,k-3$, of degree $3r-1$ such that
\begin{equation*}
\begin{aligned}
&\sup_{x\in\Rline}\,\bigg|\,\P\big\{\Sum{S}{\infLS{t}}-\bMeanS{}{t}\leqslant
x\bStDevS{}{t^{1/2}}\big\}-\UGauss{0}{1}(x)
\\[0pt]
&\hskip 90pt-\sum_{r=1}^{k-3}
\frac{\Polynom{Q}{r}(x)}{t^{r/2}}\,\Ugauss{0}{1}(x)\,\bigg|=\simb{O}\big(t^{(k-2)/2}\big),\quad
t\to\infty.
\end{aligned}
\end{equation*}
In particular,
\begin{equation*}
\Polynom{Q}{1}(x)=-\frac{1}{6}\;\bKompl{3}{0}\Herm{2}(x)+\Polynom{R}{}(\Value{L}{1},\Value{L}{2})-
\frac{\parMu{\Block{T}{}{}}{}\xEta{1}-\parMu{\Block{X}{}{}}{}\xEta{2}}{\bStDevS{}{}\parMu{\Block{T}{}{}}{}}.
\end{equation*}
\end{theorem}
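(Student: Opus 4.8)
The plan is to follow the six-step advanced-technique scheme already rehearsed in the proof of \eqref{xfghfdgds}, \eqref{we567u65i} and Theorems~\ref{drgteghrweher}--\ref{drsthtjmkfh}, now carried out in the modular setting afforded by Definition~\ref{wryrtutyity}. By assumption the basis $(\T{i},\X{i})$ admits a modular structure, so $\Sum{S}{\infLS{t}}=\Block{X}{0}{}+\sum_{k=1}^{\Block{N}{}{}}\Block{X}{k}{}$, where the simple modular basis $(\Block{T}{k}{},\Block{X}{k}{})$, $k=1,2,\dots$, consists of \iid vectors with positive $s${\zzz}components, and $\Block{N}{}{}$ counts the modules needed to exceed $t$ after the incomplete initial block of length $\LRegt{0}$. \textsl{Step~A} is the total-probability decomposition: conditioning on the initial block $(\Block{\tau}{0}{},\Block{T}{0}{},\Block{X}{0}{})$ and then applying \eqref{srdtuytfgik} to the regular part gives an identity of the form
\begin{equation*}
\P\big\{\Sum{S}{\infLS{t}}\leqslant x\big\}=\E\sum_{n\geqslant 1}\int_{0}^{t-\Block{T}{0}{}}\P\Big\{\sum_{i=1}^{n}\Block{X}{i}{}\leqslant x-\Block{X}{0}{}\,\Big|\,\sum_{i=1}^{n}\Block{T}{i}{}=t-\Block{T}{0}{}-z\Big\}\,\pdF{\Block{T}{}{}}^{*n}(t-\Block{T}{0}{}-z)\,\P\{\Block{T}{1}{}>z\}\,dz,
\end{equation*}
the outer expectation absorbing the first (incomplete, but \as finite under \KondIIIit{B}{k-\frac{3}{2}}{k}) module; this is exactly where the extra correction $(\parMu{\Block{T}{}{}}{}\xEta{1}-\parMu{\Block{X}{}{}}{}\xEta{2})/(\bStDevS{}{}\parMu{\Block{T}{}{}}{})$ in $\Polynom{Q}{1}$ will come from.

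\textsl{Steps~B and~C} are verbatim the reductions already described: using Markov's inequality (with the block-moment bounds of \KondIIIit{B}{k-\frac{3}{2}}{k}) to discard terms with $n<n_{t}$, and using the same moment bounds to truncate the $z$-integral at $z_{t}$, each $n_{t},z_{t}\to\infty$ at a controlled polynomial rate. \textsl{Step~D} is the analytic heart: to the integrand one applies the two-dimensional \emph{hybrid integro-local} limit theorem for the \iid vectors $(\Block{X}{i}{},\Block{T}{i}{})$ --- local (a density) in the $\Block{T}{}{}$-coordinate, integral (a \cdf) in the $\Block{X}{}{}$-coordinate --- in its Edgeworth form with non-uniform remainder, from \cite{[Dubinskaite=1982]}--\cite{[Dubinskaite=1984=b]}. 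Here conditions \KondIit{V} (non-degenerate correlation matrix, so the two-dimensional limit law is genuinely non-singular), \KondIIit{P}{} and \KondIit{L} (bounded convolution density / lattice structure guaranteeing a bona fide local theorem for the mixed continuous--lattice pair), and \KondIit{C} (uniform Cramér condition controlling the characteristic function at infinity) are precisely the hypotheses that make that auxiliary theorem applicable; the polynomial factors appearing there are the multivariate Chebyshev--Hermite polynomials whose coefficients are the mixed cumulants $\hkov{\Block{X}{}{}\Block{T}{}{}}{i,j}$, and it is their bookkeeping that produces $\bKompl{3}{0}$ and $\Polynom{R}{}(\Value{L}{1},\Value{L}{2})$.

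\textsl{Steps~E and~F} are then the ``tedious but standard'' classical analysis: in Step~E one recognizes the principal (Gaussian) part of the Step-D output as a Riemann sum over the nodal points $\HNode{T}{n}{t}{}$ and, after the change of variables $x\mapsto x\bStDevS{}{t^{1/2}}+\bMeanS{}{t}$ and expansion of $\HNode{T}{n}{}{}$ about its mean, converts the sum over $n$ into an integral, extracting $\UGauss{0}{1}(x)+\sum_{r=1}^{k-3}t^{-r/2}\Polynom{Q}{r}(x)\Ugauss{0}{1}(x)$ with $\deg\Polynom{Q}{r}=3r-1$; in Step~F one bounds the accumulated error --- the Riemann-sum discretization error, the tails cut in Steps~B--C, and the non-uniform remainders from Step~D --- by $\simb{O}\big(t^{-(k-2)/2}\big)$, using $\E(\Block{\tau}{1}{k})<\infty$ etc. to sum the $n$-series of remainders. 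I expect the genuine obstacle to be \textsl{Step~D} combined with the algebra of \textsl{Step~E}: correctly invoking the hybrid (density $\times$ distribution) multivariate Edgeworth expansion for a vector one of whose components is lattice-valued and the other continuous is delicate, and then disentangling the three-block covariance structure --- passing from the ``modular'' cumulants of $(\Block{X}{}{},\Block{T}{}{},\Block{\tau}{}{})$ to the closed forms $\bKompl{3}{0}$, $\Value{L}{1}$, $\Value{L}{2}$, and the identity $\Polynom{R}{}(\Value{L}{1},\Value{L}{2})=-(\parMu{\Block{X}{}{}}{}\parSigma{\Block{T}{}{}}{2}-\parMu{\Block{T}{}{}}{}\kov{\Block{X}{}{}\Block{T}{}{}}{})\bStDevS{-1}{}\parMu{\Block{T}{}{}}{-2}$ --- is exactly the kind of ``analytical complexity that flows from one form to another'' the introduction warns about. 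The contribution of the incomplete initial block, handled by the outer expectation in Step~A and producing the $\xEta{1},\xEta{2}$ terms, requires a separate but parallel estimate and is the only part not covered by the \mRI{\zzzx}sum treatment reproduced earlier.
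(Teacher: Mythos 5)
Your overall scheme (steps A--F: total probability rule, reduction of ranges, auxiliary multidimensional CLT with non{\zzz}uniform remainder, Riemann summation) is indeed the paper's, but your Step~A starts from an identity that is false for a general modular structure, and the error propagates into Step~D. You write $\Sum{S}{\infLS{t}}=\Block{X}{0}{}+\sum_{k=1}^{\Block{N}{}{}}\Block{X}{k}{}$, i.e., you assume the level $t$ is first exceeded exactly at a block boundary $\LRegt{m}$. Definition~\ref{wryrtutyity}\,(ii) only requires the $\LRegt{k}$ to be \emph{some} upper record moments of $\Sum{V}{n}$, not all of them, so the first{\zzz}passage index $\infLS{t}$ generically falls strictly inside a block; consequently $\Sum{S}{\infLS{t}}$ contains a partial contribution $\Block{X}{}{(s)}=\sum_{i=\LRegt{m}+1}^{\LRegt{m}+s}\X{i}$ from the final, incompletely traversed block. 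The paper's identity \eqref{asdfgsdhbrt} therefore conditions on \emph{three} fragments --- the initial block of length $r$, the $m$ complete blocks, and the final partial block of length $s$ governed by $\P\big\{\Block{\tau}{1}{}\geqslant s,\Block{N}{}{}(t_{2})=s,\Block{X}{}{(s)}\in dx_{2}\big\}$ --- and it is this last fragment, together with the initial one, that produces $\xEta{1}$ and $\xEta{2}$ (their definitions contain exactly that probability). Your attribution of the $\xEta{}$-corrections solely to the initial incomplete block, and your closing remark that the initial block is ``the only part not covered by the \mRI{\zzzx}sum treatment'', miss the overshoot fragment entirely. Your starting identity is valid only in the special case where the $\LRegt{k}$ are the strict ascending ladder epochs of $\Sum{V}{n}$ itself, i.e., Blackwell's dissection \eqref{ew5t6rtjuty} for proper \mII{\zzzx}sums.

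The second, related gap is the dimension of the auxiliary theorem in Step~D. Because the event $\{\infLS{t}=n\}$ ties the number of original summands to $r+\sum_{i=1}^{m}\Block{\tau}{i}{}+s$, the conditional probability in \eqref{asdfgsdhbrt} is a joint statement about $\big(\sum_{i=1}^{m}\Block{X}{i}{},\sum_{i=1}^{m}\Block{T}{i}{},\sum_{i=1}^{m}\Block{\tau}{i}{}\big)$: integral in the first coordinate, local (density) in the second, local (lattice) in the third. The paper accordingly invokes the \emph{three}-dimensional hybrid integro{\zzz}local{\zzz}local \CLT of \cite{[Dubinskaite=1982]}--\cite{[Dubinskaite=1984=b]}. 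This is precisely why the hypotheses include the $3\times 3$ correlation matrix in condition \KondIit{V}, the lattice condition \KondIit{L} on $\Block{\tau}{1}{}$, and the Cram{\'e}r condition \KondIit{C} in all three variables, and why the constants $\Value{L}{1}$, $\Value{L}{2}$, $\bStDevN{2}{}$, $\bFKovSN{}{}$ involve covariances with $\Block{\tau}{}{}$ (even though they cancel from the final closed form of $\Polynom{R}{}(\Value{L}{1},\Value{L}{2})$). Your two{\zzz}dimensional hybrid theorem for $(\Block{X}{i}{},\Block{T}{i}{})$ leaves \KondIit{L} and the $\tau$-components of \KondIit{V} and \KondIit{C} unused --- a warning sign that the decomposition being approximated is not the one the theorem's constants are built from. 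Steps B, C, E, F of your outline otherwise match the paper's sketch.
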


\begin{proof}[Sketch of proof of Theorem~\ref{srfgdhftgdth}]
The proof, similarly to the proof of Theorem~\ref{drsthtjmkfh}, consists of
steps~A--F.

\Step{A}{use of fundamental identity}
Similar to \eqref{srdtuytfgik}, we use the total probability rule. Writing
\begin{equation*}
\begin{gathered}
\Block{X}{}{[r]}=\sum_{i=1}^{r}\X{i},\ \Block{T}{}{[r]}=\sum_{i=1}^{r}\T{i},\
\Block{X}{}{(s)}=\sum_{i=\LRegt{m}+1}^{\LRegt{m}+s}\X{i},\
\Block{T}{}{(s)}=\sum_{i=\LRegt{m}+1}^{\LRegt{m}+s}\T{i}
\end{gathered}
\end{equation*}
and $\Block{N}{}{}(t)=\inf\big\{s\geqslant 1:\Block{T}{}{(s)}>t\big\}$, we have
\begin{equation*}
\begin{aligned}
&\P\big\{\Sum{S}{\infLS{t}}\leqslant x\big\}
=\,\sum_{n=1}^{\infty}\P\big\{\Sum{S}{\infLS{t}}\leqslant x,\infLS{t}=n\big\},
\end{aligned}
\end{equation*}
where the right{\zzz}hand side is the sum of, first, the expression
\begin{equation*}
\begin{aligned}
&\sum_{n=1}^{\infty}\sum_{r}\iint_{\substack{0\leqslant t_{1}\leqslant t\\[1pt] 0\leqslant x_{1}\leqslant x}}
\P\Big\{\Block{\tau}{0}{}=r,\Block{T}{}{[r]}\in dt_{1},\Block{X}{}{[r]}\in
dx_{1}\Big\}
\\[0pt]
&\hskip 30pt\times\P\Big\{\Block{\tau}{1}{}\geqslant
n-r,\Block{N}{}{}(t-t_{1})=n-r,\Block{X}{}{(n-r)}\leqslant x-x_{1}\Big\},
\end{aligned}
\end{equation*}
which corresponds to the absence of complete blocks, and, second, the
expression
\begin{equation}\label{asdfgsdhbrt}
\begin{aligned}
&\sum_{n=1}^{\infty}\sum_{r,s}\sum_{m=1}^{n-(r+s)}
\idotsint_{\substack{0\leqslant t_{1}+t_{2}\leqslant t\\[1pt] 0\leqslant x_{1}+x_{2}\leqslant x}}
\P\bigg\{\sum_{i=1}^{m}\Block{X}{i}{}\leqslant x-\big(x_{1}+x_{2}\big),
\\[0pt]
&\hskip 100pt\sum_{i=1}^{m}\Block{\tau}{i}{}=n-(s+r)\,
\bigg|\,\sum_{i=1}^{m}\Block{T}{i}{}=t-(t_{1}+t_{2})\big)\bigg\}
\\[0pt]
&\hskip
30pt\times\pdF{\Block{T}{}{}}^{*m}\big(t-(t_{1}+t_{2})\big)\,\P\Big\{\Block{\tau}{0}{}=r,\Block{T}{}{[r]}\in
dt_{1},\Block{X}{}{[r]}\in dx_{1}\Big\}
\\[0pt]
&\hskip 30pt\times\P\Big\{\Block{\tau}{1}{}\geqslant
s,\Block{N}{}{}(t_{2})=s,\Block{X}{}{(s)}\in dx_{2}\Big\}\,dt_{2}.
\end{aligned}
\end{equation}
It is clear that for large $t$ only the expression \eqref{asdfgsdhbrt} matters.

In words, in \eqref{asdfgsdhbrt} the first incomplete block is made up by $r$
summands and the value $t$ cannot be exceeded all over this block because
$t_{1}<t$; there are $m$ complete blocks, and exceeding the value $t$ still
cannot happen, but it does occurs on the $s$th summand of the the $(m+1)$th
complete block consisting of more than $s$ summands.

\Steps{B and C}
remain essentially the same as in the proof of Theorem~\ref{drsthtjmkfh}.

\Step{D}{application of refined \cCLT}
The integrand in \eqref{asdfgsdhbrt} is ready for application of Edgeworth's
expansion in three{\zzz}dimensional hybrid integro{\zzz}local{\zzz}local \CLT
with non{\zzz}uniform remainder term. These theorems see in
\cite{[Dubinskaite=1982]}--\cite{[Dubinskaite=1984=b]}.

\Steps{E and F}
are technically much more complicated than in the proof of refined elementary
renewal theorem \eqref{we567u65i} and Theorem~\ref{drsthtjmkfh} (they require a
number of special identities), but use essentially the same analytical methods.
\end{proof}

\begin{remark}
Theorem~\ref{drsthtjmkfh} is a corollary of Theorem~\ref{srfgdhftgdth}. For
\mII{\zzzx}basis $(\T{i},\X{i})$, $i=1,2,\dots$, the modular structure is
yielded by random indices $0\equiv\Ladd{0}<\Ladd{1}<\dots$ and, using Wald's
identities, $\bMeanS{}{}$ is transformed into $\MeanS{}{}$, and $\bKompl{3}{0}$
is transformed into $\Kompl{3}{0}$.\qed\end{remark}

\subsection{Limit theorems for \bmRD{\zzzx}sums with Markov dependence}\label{srtgrghsd}

Markov renewal process (see, e.g., \cite{[Cinlar=1975=b]},
\cite{[Pacheco=Prabhu=Tang=2009]}) is a homogeneous two{\zzz}dimensional Markov
chain $(\cX{k},\T{k})$, $k=0,1,\dots$, which takes values in a general state
space $(\Space{E}\times\PRline,\Field{E}\otimes\Field{R})$. Its transition
functions are defined by a semi{\zzz}Markov transition kernel. Markov renewal
processes and their counterpart, semi{\zzz}Markov processes, are among the most
popular models of applied probability\footnote{See, e.g., two bibliographies on
semi{\zzz}Markov processes \cite{[Teugels=1976]} and \cite{[Teugels=1986]}. The
former consists of about 600 papers by some 300 authors and the later of almost
a thousand papers by more than 800 authors.}.

In the same way as for renewal process, this model can be reformulated in terms
of \mRD{\zzzx}basis $(\T{i},\X{i})$, $i=1,2,\dots$, and \mRD{\zzzx}sum with
Markov dependence. Under certain regularity conditions, the sequence
$0<\LRegt{0}<\LRegt{1}<\dots$ from Definition~\ref{wryrtutyity} is built in
\cite{[Athreya=McDonald=Ney=1978]}, \cite{[Nummelin=1978=b]}. Theorem analogous
to Theorem~\ref{srfgdhftgdth} see in \cite{[Malinovskii=1991]}.

\begin{remark}[Refined \CLT for Markov chains]
In the special case $\T{i}\equiv 1$, the \mRD{\zzzx}sum with Markov dependence
becomes the ordinary sum $\Sum{S}{n}$ of random variables defined on the Markov
chain $\{\cX{i}\}_{i\geqslant 0}$.

Originally, ``D{\oe}blin's trick'' was applied to $\Sum{S}{n}$ with discrete
$\{\cX{i}\}_{i\geqslant 0}$. Subsequently, it was repeatedly noted (see, e.g.,
\cite{[Dacunha=Castelle=Duflo=1986]} and \cite{[Meyn=Tweedie=2009]}) that not
the full countable space structure is often needed, but just the existence of
one single ``proper'' point. The results then carry over with only notational
changes to the countable case and, moreover, to (by means of the ``splitting
technique'' and its counterparts, see \cite{[Athreya=Ney=1978]},
\cite{[Nummelin=1978=b]}, \cite{[Nummelin=1984]}, and
\cite{[Meyn=Tweedie=2009]}) irreducible recurrent general state space Markov
chain $\{\cX{i}\}_{i\geqslant 0}$ which satisfies strong minorization
condition. Using modular advanced technique, for discrete (see
\cite{[Bolthausen=1980]})  and general state space Markov chains (see
\cite{[Bolthausen=1982=a]}, \cite{[Malinovskii=1987=b]},
\cite{[Malinovskii=1990]}) Berry{\zzz}Esseen's estimate, Edgeworth's
expansions, and asymptotic results for large deviations in \CLT have been
obtained.\qed\end{remark}

\subsection{Limit theorems for proper \bmID{\zzzx}sums with Markov
dependence}\label{srtgdfhrf}

Markov additive process (see, e.g., \cite{[Cinlar=1972]},
\cite{[Pacheco=Prabhu=Tang=2009]}) is a generalization of Markov renewal
process with $(\cX{k},\T{k})$, $k=0,1,\dots$, taking values in
$(\Space{E}\times\Rline,\Field{E}\otimes\Field{R})$ rather than in
$(\Space{E}\times\PRline,\Field{E}\otimes\Field{R})$. Under certain regularity
conditions, for proper \mID{\zzzx}sums with Markov dependence, the sequence
$0<\LRegt{0}<\LRegt{1}<\dots$ from Definition~\ref{wryrtutyity} is built in
\cite{[Alsmeyer=2000]} and \cite{[Alsmeyer=2018]}. Loosely speaking, these are
the ladder points at which regeneration occurs.

\section{Analysis of defective \bmID{\zzzx}sums}\label{srfydfuhjfg}

For defective \mID{\zzzx}sums, a deep analytical insight, such as in the case
of defective \mII{\zzzx}sums, is (see, e.g.,
\cite{[Mikosch=Samorodnitsky=2000=b]},
\cite{[Nyrhinen=1998]}--\cite{[Nyrhinen=2001]}) hardly possible. An alternative
is computer{\zzz}intensive analysis (see, e.g., \cite{[Albrecher=Kantor=2002]},
\cite{[Lehtonen=Nyrhinen=1992=b]}).

Note that the papers devoted to defective \mID{\zzzx}sums with Markov
dependence are formulated as a study of the ruin probabilities for risk
processes of Markovian type. This link with the ruin theory not only suggests
rational examples of defective \mID{\zzzx}sums, but also indicates areas in
which they are of major interest.

\section{A concluding remark}\label{rgryhdhf}

Due credits must be given to D{\oe}blin (1915{\zzz}1940), who (see
\cite{[Levy=1955]}, \cite{[Lindvall=1991]}, \cite{[Meyn=Tweedie=1993]}) put
forth (see \cite{[Deblin=1938=a]}--\cite{[Deblin=Fortet=1937=a]}) the major
innovative ideas of coupling, decomposition and dissection\footnote{The English
translation of the title of \cite{[Deblin=1938=a]} is ``On two problems of
Kolmogorov concerning countable Markov chains'', with reference to
\cite{[Kolmogorov=1936]}. Therefore, D{\oe}blin's glory in this regard is
partly shared with Kolmogorov.} in Markov chains. The contributions of many
scientists after him are mere a development of his seminal ideas. In this
regard, compelling is the phrase from \cite{[Griffeaths=1978]}, that ``our goal
is to isolate the key ingredients in D{\oe}blin's proof and use them to derive
ergodic theorems for much more general Markov processes on a typically
uncountable state space''.

Having created a tool for sequential analysis, Anscombe (see
\cite{[Anscombe=1952]}) rediscovered that part of ``D{\oe}blin's trick'' (as
Kesten called it in \cite{[Kesten=1977]}), or ``D{\oe}blin's method'' (as
Kolmogorov called it in \cite{[Kolmogorov=1949]}), which is the use of
Kolmogorov's inequality for maximum of partial sums. R{\'e}nyi (see
\cite{[Renyi=1957]}, \cite{[Renyi=1960]}), when he adapted Anscombe's advance
to compound sums $\Sum{S}{\infLS{t}}$ with $\infLS{t}$ more general\footnote{In
R{\'e}nyi's sums (see Remark~\ref{srgtfdhnfdhn}), $\infLS{t}$ is dependent on
\iid summands and $\infLS{t}/\,t\toP\varrho$, $t\to\infty$, but is of a general
form.} than in \eqref{xfghmhrey}, acknowledged (see \cite{[Renyi=1960]}) that
``K.L.\,Chung kindly called my attention to the fact that the main idea of the
proof (the application of the inequality of Kolmogorov $\dots$) given in
\cite{[Renyi=1957]} is due to D{\oe}blin (see \cite{[Deblin=1938=a]}). It has
been recently proved $\dots$ that the method in question can lead to the proof
of the most general form of Anscombe's theorem.'' Just the way it is in the
world of ideas.


\end{document}